\documentclass[11pt]{amsart}



\usepackage[colorlinks,citecolor=blue]{hyperref} 
\usepackage{amsthm} 
\usepackage{amsmath} 
\usepackage{amssymb} 
\usepackage{pinlabel} 
\usepackage{cleveref}

\usepackage{thm-restate} 

\usepackage{tikz}
\usepackage{enumerate} 
\usepackage[numbered]{bookmark} 

\usepackage{todonotes}



\newcommand{\CC}{\mathbb{C}}

\newcommand{\NN}{\mathbb{N}}

\newcommand{\RR}{\mathbb{R}}

\newcommand{\ZZ}{\mathbb{Z}}




\renewcommand{\setminus}{{\smallsetminus}}


\newcommand{\rank}{\operatorname{rank}}







\newcommand{\vv}{{\bf v}}
\newcommand{\ww}{{\bf w}}








 





\newcommand{\Mod}{\operatorname{Mod}} 


\newcommand{\GL}{\operatorname{GL}} 

  





\theoremstyle{plain}
\numberwithin{equation}{section} 
\numberwithin{figure}{section}
\numberwithin{table}{section}
\newtheorem{theorem}{Theorem}[section]
\newtheorem{corollary}[theorem]{Corollary}
\newtheorem{lemma}[theorem]{Lemma}

\newtheorem*{conjecture*}{Conjecture}
\newtheorem{proposition}[theorem]{Proposition}



\theoremstyle{definition}
\newtheorem{definition}[theorem]{Definition}

\newtheorem*{remark*}{Remark}

\newtheorem*{claim*}{Claim}

\newtheorem*{problem*}{Problem}

\newtheorem*{question*}{Question}
\newtheorem*{answer*}{Answer}
\newtheorem*{case*}{Case}
\newtheorem*{application*}{Application}


\crefformat{equation}{(#2#1#3)}
\crefmultiformat{equation}{(#2#1#3)}{ and~(#2#1#3)}{, (#2#1#3)}{ and~(#2#1#3)}

\usepackage{thm-restate}

\theoremstyle{theorem}
\newtheorem*{pennerconst*}{Penner's Construction}

\newcommand{\s}{\mathbf{s}}
\newcommand{\End}{\mathrm{End}}
\newcommand{\e}{\mathbf{e}}
\newcommand\CRG{\mathrm{CRG}}
\newcommand\bG{\mathbf{G}}
\newcommand\crr{\mathrm{cr}}
\newcommand\Rplusmult{\RR_+^\times}
\newcommand\GLp{\mathrm{GL^+}}
\newcommand\Fix{\mathrm{Fix}}
\newcommand\AND{\quad\mbox{and}\quad}
\newcommand\GP{\mathcal{GP}}

\author{Bal\'azs Strenner}
\address{School of Mathematics \\ Georgia Institute of Technology \\
  686 Cherry Street NW, Atlanta GA 30332-0160, USA}
\email{strenner@math.gatech.edu}
\date{\today}
\title[Galois conjugates of pseudo-Anosov stretch factors]{Galois
  conjugates of pseudo-Anosov stretch factors are dense in the complex
plane}

\begin{document}

\begin{abstract}
  In this paper, we study the Galois conjugates of stretch factors of
  pseudo-Anosov elements of the mapping class group of a surface. We
  show that---except in low-complexity cases---these conjugates are
  dense in the complex plane. For this, we use Penner's construction
  of pseudo-Anosov mapping classes. As a consequence, we obtain that
  in a sense there is no restriction on the location of Galois
  conjugates of stretch factors arising from Penner's construction.
  This complements an earlier result of Shin and the author stating
  that Galois conjugates of stretch factors arising from Penner's
  construction may never lie on the unit circle.
\end{abstract}

\maketitle

\section{Introduction}
\label{sec:intro}

Let $S$ be a compact orientable surface. The Nielsen--Thurston
classification theorem \cite{Thurston88} states that every element $f$
of the mapping class group $\Mod(S)$ is either finite order, reducible
or pseudo-Anosov. Associated to every pseudo-Anosov element is a
stretch factor $\lambda >1$ which is an algebraic integer. The goal of
this paper is to study the location of Galois conjugates of
pseudo-Anosov stretch factors in the complex plane.

Let $S_{g,n}$ be the orientable surface of genus $g$ with $n$
boundary components. We define the complexity of $S_{g,n}$ as
$\xi(S_{g,n}) = 3g-3+n$. Note that $\xi(S)$ is half the dimension
of the Teichm\"uller space of $S$.

The main result of the paper is the following.

\begin{theorem}\label{theorem:dense-conjugates-main}
  If $S$ is a compact orientable surface with $\xi(S) \ge 3$, then
  the Galois conjugates of stretch factors of pseudo-Anosov elements
  of $\Mod(S)$ are dense in the complex plane.
\end{theorem}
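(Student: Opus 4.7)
The strategy is to use Penner's construction to produce, for any target $\alpha\in\CC$, a pseudo-Anosov whose stretch factor polynomial has a non-dominant root close to $\alpha$. Recall that Penner's construction takes a pair of filling multicurves $A=\{a_1,\ldots,a_k\}$, $B=\{b_1,\ldots,b_l\}$ on $S$ and positive exponents $\vec n,\vec m$ to the pseudo-Anosov mapping class
\[
f_{\vec n,\vec m}=T_{a_1}^{n_1}\cdots T_{a_k}^{n_k} T_{b_1}^{-m_1}\cdots T_{b_l}^{-m_l};
\]
its stretch factor is the Perron--Frobenius eigenvalue of a non-negative integer matrix $M(\vec n,\vec m)$ whose entries are polynomials in $(\vec n,\vec m)$, obtained as the product of the individual twist matrices. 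The Galois conjugates of the stretch factor are exactly the remaining roots of the characteristic polynomial $p_{\vec n,\vec m}(z)=\det(zI-M(\vec n,\vec m))$, so the task is to show that as $(\vec n,\vec m)$ vary, these non-dominant roots become dense in $\CC$.

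The first step is a combinatorial setup. Use the hypothesis $\xi(S)\ge 3$ to pick $A,B$ with $k+l$ large enough that $M(\vec n,\vec m)$ is a sufficiently large non-negative integer matrix (of size at least $6$, the natural threshold separating the exceptional low-complexity cases) and has an intersection pattern rich enough to admit a non-trivial invariant ``recessive'' structure; this is precisely where the complexity bound enters, since for smaller $\xi(S)$ the matrix is too constrained to support the density argument.

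The main step is a two-scale argument. Partition the exponents into a small ``shape'' group $I$ of bounded size and a complementary large ``amplification'' group $J$. Sending the exponents in $J$ to infinity makes the Perron--Frobenius eigenvalue of $M(\vec n,\vec m)$ very large while leaving, in a suitable rescaled limit, a recessive sub-matrix whose characteristic polynomial depends only on the $I$-parameters. Varying the $I$-parameters over $\ZZ_{>0}^{|I|}$ produces a family of integer polynomials; one then shows via a rank/Jacobian computation that as the $I$-parameters grow, the roots of this recessive characteristic polynomial become dense in $\CC$. A continuity-of-roots estimate propagates this density back to the full polynomial $p_{\vec n,\vec m}$, provided the amplification exponents are taken large enough both to dominate the spectrum with a Perron root and to perturb the tuned recessive root by less than the prescribed tolerance.

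The main obstacle is the density statement inside the recessive block. Because the coefficients of $p_{\vec n,\vec m}$ are integer-valued functions of the parameters that come from a specific algebraic family (products of concrete twist matrices), one does not have continuous control over the coefficients. The crux of the argument will be to identify an explicit sub-word of Penner's construction whose parameter-to-coefficient map is non-degenerate in a strong enough sense, and then to combine this with a Minkowski-style integer approximation argument to produce integer polynomials with a root in any prescribed disk of $\CC$. I expect this explicit calculation, carried out on a minimal complexity surface and then transported to general $S$ via an inclusion of subsurfaces, to be the technical heart of the paper.
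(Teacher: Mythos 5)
Your overall strategy---send some twist exponents to infinity in a Penner word, identify the limiting behavior of the non-dominant roots of the characteristic polynomial, and tune parameters so that a non-dominant root lands near a prescribed target---is the right shape, and it matches the paper's architecture at the coarsest level. But two of the steps you defer are exactly where the content lies, and as stated they contain genuine gaps.

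First, you assert that ``the Galois conjugates of the stretch factor are exactly the remaining roots of the characteristic polynomial $p_{\vec n,\vec m}(z)$.'' This is false in general: the characteristic polynomial of the Perron--Frobenius matrix need not be irreducible, so a non-dominant root need not be a Galois conjugate of the dominant one. The paper has to work to rule this out: it takes the uniform limit $M_{\gamma,k}=Q_{i_K}^k\cdots Q_{i_1}^k$ as $k\to\infty$, shows (quoting Lemma 1.2 and Theorem 3.1 of the degrees paper) that $u_k(x)/(x-\lambda_k)$ converges to $\chi(f_\gamma)$ where $f_\gamma$ is a composition of projections between hyperplanes determined by the intersection matrix, and then uses the hypothesis that the limit root $\theta$ is \emph{not an algebraic unit} to force $\theta_k\ne\theta$ and to conclude that $\theta_k$ really is conjugate to $\lambda_k$ for large $k$. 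Your proposal never addresses conjugacy versus mere co-rootedness, and without the non-unit device (or something replacing it) the argument does not close.

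Second, the density mechanism you sketch---a ``rank/Jacobian computation'' on the parameter-to-coefficient map plus a ``Minkowski-style integer approximation''---is not what makes the theorem true, and I do not see how to make it work: the recessive data in the limit is not a family of integer polynomials indexed by integer parameters whose coefficients you can steer by lattice-point arguments. In the paper, the tunable parameter is the \emph{combinatorial word} (the closed path $\gamma$ in the graph $\bG(\Omega)$), not the exponents, and density comes from a genuinely different source: the maps $s_\gamma$ restricted to a $2$-dimensional subspace generate a subgroup of $\GLp$ of a plane, each short loop acting as a shear-and-stretch fixing a line, and one proves (via a commutator trick and the hypothesis that two cross-ratios of the intersection matrix generate a dense subgroup of $\Rplusmult$) that this subgroup is dense in all of $\GLp$. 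That is where the complexity bound $\xi(S)\ge 3$ actually enters---one needs two triples of curves whose $3\times 3$ intersection matrix has rank $3$, positive entries, and dense cross-ratio group---not merely ``a matrix of size at least $6$.'' Your proposal correctly flags this step as the technical heart but offers a mechanism that would not produce it.
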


We proceed with providing motivation for the theorem. Then, at the end of the
introduction, we give an outline of the proof. 

\subsection*{Relation to Fried's problem} Every pseudo-Anosov stretch
factor is a bi-Perron algebraic unit: an algebraic unit $\lambda >1$
whose Galois conjugates other than $\lambda$ and $1/\lambda$ lie in
the annulus $1/\lambda < |z| < \lambda$. Fried \cite{Fried85} asked
whether or not the converse holds (up to taking powers), and it became
a folklore conjecture that it does. This would give a characterization
of the numbers that arise as pseudo-Anosov stretch factors. Assuming
this conjecture, one would expect the Galois conjugates of
pseudo-Anosov stretch factors to be dense in the complex plane.
\Cref{theorem:dense-conjugates-main} is consistent with the
conjecture.

\subsection*{Related results}
Other than the bi-Perron property, little is known about the Galois
conjugates of pseudo-Anosov stretch factors. Nevertheless, stretch
factors of maps appear in many different but related contexts, where
some results about the Galois conjugates are available.

Hamenst\"adt \cite[Theorem 1]{Hamenstadt14} showed that (in an appropriate
sense) typical stretch factors of the homological actions of pseudo-Anosov
mapping classes are totally real.  If the typical pseudo-Anosov
stretch factor was also totally real, then the pseudo-Anosov stretch factors we
construct in this paper would be atypical, since their Galois conjugates
are everywhere in the complex plane.

Thurston \cite{Thurston14} studied the stretch factors of graph maps,
outer automorphisms of free groups and post-critically finite
self-maps of the unit interval. He gave a characterization of such
stretch factors in terms of the location of Galois conjugates: they
are the so-called weak Perron numbers.

Following up on Thurston's work, Tiozzo \cite{Tiozzo13} studied the
fractal defined as the closure of the Galois conjugates of growth
rates of superattracting real quadratic polynomials and showed that
this fractal is path-connected and locally connected. An analogous
fractal for pseudo-Anosov stretch factors would be the closure of the
Galois conjugates of pseudo-Anosov stretch factors $\lambda$
satisfying $\lambda \le T$ for some $T>1$. As far as we know,
this fractal has not yet been studied.

\subsection*{Construction of pseudo-Anosov mapping classes} 
To prove \Cref{theorem:dense-conjugates-main}, we use the following
construction of pseudo-Anosov mapping classes \cite{Penner88} (see
also \cite{Fathi92}).


\begin{pennerconst*}
  Let $A = \{a_1, \ldots, a_n\}$ and $B = \{b_1, \ldots, b_m\}$ be a pair of
  filling\footnote{The components of the complement of $A$ and $B$ are
    disks or once-punctured disks.} multicurves on an orientable surface $S$.
  Then any product of positive Dehn twists $T_{a_j}$ and negative Dehn twists
  $T_{b_k}^{-1}$ is pseudo-Anosov provided that each curve is used at least
  once.
\end{pennerconst*}

For each pair of multicurves $C = A \cup B$, we denote by $\GP(C)$ the
set of Galois conjugates of stretch factors of pseudo-Anosov elements
of $\Mod(S)$ arising from Penner's construction using the set of
curves $C$. \Cref{theorem:dense-conjugates-main} is a corollary of the
following more concrete statement.

\begin{restatable}{theorem}{maintheorem}\label{theorem:dense_conjugates}
  If $S$ is a compact orientable surface with $\xi(S) \ge 3$, then
  there is a collection of curves $C$ on $S$ such that
  $\overline{\GP(C)} = \CC$.
\end{restatable}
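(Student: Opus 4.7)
The plan is to find, on any surface $S$ with $\xi(S) \ge 3$, a filling pair of multicurves $C = A \cup B$ and a multi-parameter family of Penner pseudo-Anosov words whose characteristic polynomials have roots dense in $\CC$. The main tool is the explicit matrix description underlying Penner's construction.

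First I would fix a concrete $C$, taking $|A|$ and $|B|$ large enough so that the Penner matrices have dimension at least three, which is needed to admit non-real eigenvalues. In Penner's framework, a positive twist power $T_{a_\alpha}^{\,n}$ corresponds to $I + nE_\alpha$, where $E_\alpha$ is a rank-one non-negative integer matrix determined by the intersection pattern of $A\cup B$, and similarly $T_{b_\beta}^{-n} \leftrightarrow I + nF_\beta$. A Penner word in which every generator appears at least once yields a pseudo-Anosov whose stretch factor is the Perron--Frobenius eigenvalue of the resulting integer matrix $M$, and whose Galois conjugates are roots of $\det(zI - M) \in \ZZ[z]$.

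The key step is to exhibit a multi-parameter family of such matrices whose characteristic polynomials have root loci dense in $\CC$. I would choose a seed Penner word $W_0$ using every curve of $C$, with matrix $M_0$, and append a tail of Dehn twists $T_a^{\,n_1} T_b^{-m_1} \cdots T_a^{\,n_r} T_b^{-m_r}$ with variables $n_i, m_i \in \ZZ_{>0}$. The resulting matrix is $M_0$ times a product of rank-one perturbations of the identity, so the characteristic polynomial $p_{\vec n,\vec m}(z) \in \ZZ[z]$ depends polynomially on all the $n_i, m_i$ with bounded degree in each. Given a target $z_0 \in \CC$ and $\epsilon > 0$, I would produce a root of some $p_{\vec n,\vec m}$ within $\epsilon$ of $z_0$ by making $|p_{\vec n,\vec m}(z_0)|$ small compared to $|p_{\vec n,\vec m}'(z_0)|$ and applying Rouch\'e's theorem. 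The former is a multi-variable complex Diophantine approximation problem, which should succeed provided a generic non-degeneracy hypothesis on the coefficient polynomials evaluated at $z_0$ holds.

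The main obstacle is establishing this non-degeneracy for a dense set of $z_0 \in \CC$. I would first reduce to a fixed small surface $S_0$ of complexity exactly $3$, and choose $C$, $W_0$, and the tail length $r$ explicitly on $S_0$ to verify the required algebraic conditions by direct computation. Extending to general $S$ with $\xi(S) \ge 3$ can then be carried out by embedding $S_0$ into $S$ and completing $C$ with extra curves as needed to form a filling pair on $S$; the flexibility of Penner's construction, together with the freedom to choose the tail, should allow the density property to persist.
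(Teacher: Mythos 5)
Your proposal takes a genuinely different route from the paper, but it has two gaps, and the first one is essentially the whole theorem. The step ``make $|p_{\vec n,\vec m}(z_0)|$ small compared to $|p_{\vec n,\vec m}'(z_0)|$ by multi-variable Diophantine approximation, assuming a generic non-degeneracy hypothesis'' is exactly the hard part, and it is not clear it can be done in the form you describe: $p_{\vec n,\vec m}(z_0)$ is a polynomial with complex coefficients evaluated at integer points, and such values are not dense near $0$ in general (already $n+i$ for $n\in\ZZ$ stays at distance $1$ from the origin), nor is the needed relative smallness against the derivative automatic. You neither formulate the non-degeneracy condition nor indicate why it would hold for a dense set of $z_0$, so the proposal defers rather than solves the problem. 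The paper avoids this entirely by a limiting argument: it sends the twisting powers along a closed path to infinity, in which limit the (suitably normalized) Penner matrices degenerate to a composition of projections $f_\gamma$ between hyperplanes, and a quotient of characteristic polynomials converges to $\chi(f_\gamma)$ (Theorem 2.2, imported from \cite{StrennerDegrees}). The approximation problem is thereby transferred from integer polynomials to a Lie-group density statement: for a ``rich'' collection of curves the maps $s_\gamma$ generate a dense subgroup of $\GLp$ of a plane (Lemmas 3.5--3.6 and 3.10), so every monic real quadratic with positive constant term is approximated by factors of some $\chi(f_\gamma)$, which yields density of the eigenvalues, hence of the Galois conjugates.

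The second gap: a root of $\det(zI-M)$ for a Penner matrix $M$ need not be a Galois conjugate of the stretch factor $\lambda$, because the characteristic polynomial can be reducible and $\lambda$ is a root of only one irreducible factor. Your sentence ``whose Galois conjugates are roots of $\det(zI-M)$'' gives the containment in the wrong direction for your purposes. The paper must and does address this: the second half of Theorem 2.2 shows that the approximating roots $\theta_k$ lie in the same irreducible factor as $\lambda_k$ for all but finitely many $k$, and Theorem 2.1 needs the hypothesis that the target $\theta$ is \emph{not} an algebraic unit precisely to run that argument (all roots of the Penner characteristic polynomials are units, so $\theta_k\ne\theta$ eventually). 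Any direct approach via characteristic polynomials, including yours, needs an analogue of this step. Finally, your reduction to a complexity-$3$ surface by ``direct computation'' of unstated algebraic conditions, and the extension to general $S$ by embedding, are plausible in outline (the paper does something similar via explicit intersection matrices on $S_{0,6}$, $S_{1,3}$, $S_{2,0}$ and then adding curves to fill), but as written they rest on computations you have not specified.
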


Penner \cite{Penner88} asked if every pseudo-Anosov mapping
class has a power that arises from his construction. This was
answered in the negative by Shin and the author \cite{ShinStrenner15} by showing that
stretch factors arising from Penner's construction do not have Galois
conjugates on the unit circle. In Question 3.1 of the paper
\cite{ShinStrenner15}, we asked if such Galois conjugates can be
arbitrarily close to the unit circle for a fixed collection of curves
$C$. \Cref{theorem:dense_conjugates} answers this question positively.

\subsection*{Low complexity cases}

The hypothesis on the complexity of the surface in
\Cref{theorem:dense-conjugates-main} is necessary, because when
$\xi(S) \le 2$, the Galois conjugates of stretch factors lie on the
real line and the unit circle. This is for the following reasons.

Pseudo-Anosov stretch factors arise as eigenvalues of
integral symplectic matrices of size $2\xi(S)\times 2\xi(S)$. These matrices come
from the integral piecewise linear action of the mapping class on the
measured lamination space of $S$ which has dimension $2\xi(S)$. A
symplectic matrix that has an eigenvalue off the real line and the
unit circle has at least 4 such eigenvalues (by complex conjugation
and the fact that eigenvalues come in reciprocal pairs), so if it also
has a positive real eigenvalue, its size has to be at least
$6 \times 6$. 

Computer experiments suggest that the Galois conjugates are dense in
the real line and the unit circle when $\xi(S) = 2$. However, when
restricting to Penner's construction, the Galois conjugates can only
be positive real. The fact that they cannot lie on the unit circle was
mentioned earlier. The fact that they cannot be negative can be found
in the author's thesis \cite[Section 6.2]{StrennerThesis}.

\subsection*{Sketch of the proof}
We divide the proof into three parts, corresponding to
\Cref{sec:criterion,sec:approximation-of-linear-maps,sec:rich-collections}.

In \Cref{theorem:Galois-conjugate-guarantee}, we give a sufficient
condition for a complex number to be contained in $\GP(C)$ in terms of
the eigenvalues of compositions of certain projections from hyperplanes to
other hyperplanes in $\RR^n$. This reduces to problem of
approximating complex numbers by Galois conjugates of stretch factors
to approximating complex numbers by eigenvalues of compositions of
projections. The proof of this uses results from
\cite{StrennerDegrees} stating that for certain sequences of
pseudo-Anosov mapping classes arising from Penner's construction,
some Galois conjugates of the stretch factors converge, and the
limits are eigenvalues of a composition of projections.

In \Cref{sec:approximation-of-linear-maps}, we define the notion of
\emph{rich} collections of curves, and in
\Cref{theorem:dense-conjugates-criterion} we show that if $C$ is a
rich collection of curves, then $\overline{\GP(C)} = \CC$. The main
ingredient to this is showing that if $C$ is a rich collection, then
every invertible linear transformation of the 2-dimensional plane can
be approximated by compositions of certain projections from
2-dimensional planes to other 2-dimensional planes in $\RR^3$. This
allows us to apply \Cref{theorem:Galois-conjugate-guarantee} to
conclude that all complex numbers are contained in $\overline{\GP(C)}$.

Finally, in \Cref{sec:rich-collections} we construct rich collections
of curves on various surfaces and complete the proof of \Cref{theorem:dense_conjugates}.

\subsection*{Nonorientable surfaces} Penner's construction also works
on nonorientable surfaces \cite{Penner88,StrennerDegrees}, and an
analog of \Cref{theorem:dense_conjugates} could be proven also for
sufficiently complicated nonorientable surfaces. In the orientable
case, we deduce \Cref{theorem:dense_conjugates} as a corollary of
\Cref{theorem:dense-conjugates-criterion} and
\Cref{prop:rich-collections-exist}.
\Cref{theorem:dense-conjugates-criterion} applies to the nonorientable
case as it is, so one would only need to construct rich collections of
curves on nonorientable surfaces.

\section{Galois conjugates of stretch factors in Penner's construction}
\label{sec:criterion}

The goal of this section is to establish a connection between Galois conjugates
of pseudo-Anosov stretch factors and eigenvalues of certain compositions of
projections.

Let $C = \{c_1,\ldots,c_n\}$ be a collection of curves used in
Penner's construction. The intersection matrix $\Omega = i(C,C)$ is the
$n \times n$ matrix whose $(j,k)$-entry is the geometric intersection
number $i(c_j,c_k)$. 

Let $Z_i$ be the orthogonal complement of the $i$th row of $\Omega$.
Since $\Omega$ is an intersection matrix of a collection of filling
curves, all rows are nonzero and the $Z_i$ are hyperplanes. Let
\begin{displaymath}
  p_{i \gets j} : \RR^n \to Z_i
\end{displaymath}
be the---not necessarily orthogonal---projection onto the hyperplane
$Z_i$ in the direction of $\e_j$, the $j$th standard basis vector in
$\RR^n$. This projection is defined if and only if $\e_j$ is not
contained in $Z_i$, which is in turn equivalent to the statement that
the $(i,j)$-entry of $\Omega$ is positive. 

Let $\bG(\Omega)$ be the graph on the vertex set $\{1, \ldots, n\}$
where $i$ and $j$ are connected if the $(i,j)$-entry of $\Omega$ is
positive. For a closed path
\begin{displaymath}
  \gamma = (i_1\cdots i_Ki_1)
\end{displaymath}
in
$\bG(\Omega)$, define the linear map $f_\gamma: Z_{i_1} \to Z_{i_1}$
by the formula
\begin{displaymath}
  f_\gamma = (p_{i_1\gets i_{K}}\circ \cdots
  \circ p_{i_2\gets i_1})|_{Z_{i_1}}.
\end{displaymath}
In words, $f_\gamma$ is a composition of projections: first from
$Z_{i_1}$ to $Z_{i_2}$, then from $Z_{i_2}$ to $Z_{i_3}$, and finally
from $Z_{i_K}$ back to $Z_{i_1}$.

The following theorem gives a sufficient criterion for a complex
number to be approximated by Galois conjugates of stretch factors
arising from Penner's construction using a curve collection $C$.

\begin{theorem}\label{theorem:Galois-conjugate-guarantee}
  Let $C$ be a collection of curves satisfying the hypotheses of
  Penner's construction and let $\Omega = i(C,C)$. Let $\gamma$ be a
  closed path in $\bG(\Omega)$ (not necessarily traversing every vertex). If
  $\theta$ is an eigenvalue of $f_\gamma$ and it is not an algebraic
  unit, then $\theta \in \overline{\GP(C)}$.
\end{theorem}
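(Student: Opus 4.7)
The plan is, given the closed path $\gamma = (i_1\cdots i_K i_1)$ in $\bG(\Omega)$, to construct an explicit sequence of Penner pseudo-Anosov mapping classes whose stretch factors have Galois conjugates approaching $\theta$, and then to invoke the matrix-asymptotic results of \cite{StrennerDegrees}.

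For each large integer $N$, I would set $\phi_N = w_0 \cdot T_{c_{i_K}}^{\epsilon_K N}\cdots T_{c_{i_1}}^{\epsilon_1 N}$, where $\epsilon_j = +1$ if $c_{i_j}\in A$, $\epsilon_j = -1$ if $c_{i_j}\in B$, and $w_0$ is a fixed bounded Penner word using every curve of $C$ at least once (so Penner's hypothesis is met even if $\gamma$ misses some vertices of $\bG(\Omega)$). Then $\phi_N$ is pseudo-Anosov with stretch factor $\lambda_N$. Let $M_N$ be the integer matrix representing $\phi_N$ on the piecewise linear coordinates for measured laminations: each high-power factor has the form $I + \epsilon_j N\, E_{i_j}$, where $E_{i_j}$ is a rank-one operator whose image is spanned by $e_{i_j}$ and whose kernel is exactly $Z_{i_j}$. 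After dividing out the $N^K$ growth of the product, the dominant direction of each $T_{c_{i_j}}^{\epsilon_j N}$ is killed by the next projection $p_{i_{j+1}\gets i_j}$, and the composition degenerates precisely to $f_\gamma$ on $Z_{i_1}$. The results of \cite{StrennerDegrees} promote this matrix degeneration to a spectral statement: the characteristic polynomial $p_N$ of $M_N$ splits, as $N\to\infty$, into a ``large'' part whose roots (including $\lambda_N$ and $1/\lambda_N$) escape to $\infty$ and $0$, and a ``bounded'' part whose roots converge to the eigenvalues of $f_\gamma$. Crucially, the \emph{non-unit} eigenvalues of $f_\gamma$ appear specifically as limits of Galois conjugates of $\lambda_N$, which yields $\theta \in \overline{\GP(C)}$.

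The main obstacle is the last clause: showing that the root of $p_N$ converging to $\theta$ lies in the minimal polynomial $m_N$ of $\lambda_N$ rather than in the complementary factor $q_N = p_N/m_N$. Since $\det M_N = \pm 1$, both $m_N$ and $q_N$ are monic integer polynomials with constant term $\pm 1$, so both have only algebraic units as roots; the non-unit hypothesis does not distinguish them by algebra alone. The substance of the cited degree theorem is geometric: the complementary factor $q_N$ comes from subspaces that are, in a suitable sense, fixed by the sequence of twists, and its roots accumulate only on a small exceptional set of algebraic units (roots of unity and similar). The hypothesis that $\theta$ is not an algebraic unit places it outside this exceptional set, forcing the nearby roots of $p_N$ to lie in $m_N$ and hence be genuine Galois conjugates of $\lambda_N$.
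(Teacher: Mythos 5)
Your overall architecture is the paper's: take high powers of the twists along $\gamma$, let the normalized products degenerate to the composition of projections $f_\gamma$, and invoke the spectral results of \cite{StrennerDegrees} to turn convergence of roots into Galois conjugacy with $\lambda_N$. However, two steps do not go through as written. First, your fix for paths that miss some vertices---prepending a fixed Penner word $w_0$---breaks the limit computation. The cited convergence theorem applies to products $Q_{i_K}^{k}\cdots Q_{i_1}^{k}$ for a closed path visiting \emph{every} vertex, and its conclusion is $\chi(M_{\gamma,k})/(x-\lambda_k)\to\chi(f_\gamma)$. Inserting the fixed invertible factor $Q_{w_0}$ composes the degenerate limit with a fixed linear map, so the bounded roots converge to the spectrum of a different operator, not of $f_\gamma$; you would have to redo the asymptotics from scratch. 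The paper sidesteps this: $\chi(f_\gamma)$ is invariant under homotopy of $\gamma$ (Proposition 4.1 of \cite{StrennerDegrees}) and $\bG(\Omega)$ is connected, so one may replace $\gamma$ by a homotopic closed path traversing every vertex, after which each $M_{\gamma,k}$ already corresponds to a genuine Penner pseudo-Anosov.

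Second, your mechanism for the last step---that the roots of the complementary factor $q_N$ accumulate only on a ``small exceptional set of roots of unity and similar,'' which excludes the non-unit $\theta$---is not what is true or needed. The non-unit hypothesis enters earlier and more simply: since the $Q_i$ are integral and invertible, $u_k=\chi(M_{\gamma,k})$ is monic with constant term $\pm 1$, so \emph{every} root of $u_k$ is an algebraic unit; as $\theta$ is not a unit, the roots $\theta_k\to\theta$ satisfy $\theta_k\ne\theta$ for all but finitely many $k$. That inequality is exactly the hypothesis of the second half of the cited theorem, which then delivers that $\theta_k$ is conjugate to $\lambda_k$. (The reason that result holds is not that non-conjugate roots are near roots of unity, but that roots of $u_k$ not conjugate to $\lambda_k$ are roots of monic integer polynomials of bounded degree with uniformly bounded roots, hence lie in a finite set; a convergent sequence of such roots is eventually constant, which $\theta_k\ne\theta$ forbids.) So the conclusion you want is available as a black box, but the justification you sketch for it is not correct, and without the $\theta_k\ne\theta$ step the non-unit hypothesis is never actually used.
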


The main ingredient of the proof is a result from the paper
\cite{StrennerDegrees}. Before we state the theorem, we first recall some
notations from Section 2.3 of that paper.

 Associated to the Dehn twists about the curves $c_i$
are $n\times n$ integral matrices $Q_i$ (depending only on $\Omega$)
with the following property: for a product of the Dehn twists about
the $c_i$ where every twist appears at least once, the corresponding
product $M$ of the $Q_i$ is a Perron--Frobenius matrix whose leading
eigenvalue equals the stretch factor of the pseudo-Anosov map.

The following is the combination of Lemma 1.2 and Theorem 3.1 of
\cite{StrennerDegrees}.

\begin{theorem}\label{theorem:convergence}
  Let $\Omega$ be the intersection matrix of a collection of curves
  satisfying the hypotheses of Penner's construction. Let
  $\gamma = (i_1\ldots i_Ki_1)$ be a closed path in
  $\bG(\Omega)$ visiting each vertex at least once. Let
  \begin{equation}\label{eq:M-gamma}
    M_{\gamma,k} = Q_{i_K}^{k} \cdots Q_{i_1}^{k}
  \end{equation}
  and let $\lambda_k$ be the Perron--Frobenius eigenvalue of
  $M_{\gamma,k}$. Denote by $u_k(x)$ and $v(x)$ the characteristic
  polynomials $\chi(M_{\gamma,k})$ and
  $\chi(f_\gamma)$, respectively. Then we have
  \begin{displaymath}
    \frac{u_k(x)}{x-\lambda_k} \to v(x).
  \end{displaymath}

  If, in addition, $v(\theta) = 0$ and $\theta_k \to \theta$ is a
  sequence such that $u_k(\theta_k) = 0$ and $\theta_k \ne \theta$ for all
  but finitely many $k$, then $\theta_k$ and $\lambda_k$ are
  Galois conjugates for all but finitely many $k$.
\end{theorem}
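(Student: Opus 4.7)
The plan is to reduce to Theorem~\ref{theorem:convergence}, which requires the path to visit every vertex, by replacing $\gamma$ with an extension $\gamma'$ visiting every vertex of $\bG(\Omega)$ while still having $\theta$ as an eigenvalue of $f_{\gamma'}$. Since $C$ fills $S$, the graph $\bG(\Omega)$ is connected, so I pick a closed walk $\sigma$ at the basepoint $i_1$ visiting every vertex absent from $\gamma$, and set $\gamma' := \sigma\gamma\sigma^{-1}$. The map $f_\sigma \colon Z_{i_1}\to Z_{i_1}$ is not invertible---its kernel contains $\RR e_{i_1}$ since $\Omega_{i_1 i_1}=0$---but I would verify that $\ker f_\sigma = \RR e_{i_1}$ exactly by tracking the successive images: after $\ell$ projections the image lies in $Z_{\sigma_\ell}\cap Z_{\sigma_{\ell+1}}$, and because $\sigma_\ell$ and $\sigma_{\ell+1}$ are adjacent in $\bG(\Omega)$ one has $e_{\sigma_{\ell+1}}\notin Z_{\sigma_\ell}$, so the next projection remains injective on that image.

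Thus $f_\sigma$ descends to a bijection $\bar f_\sigma$ on the quotient $W := Z_{i_1}/\RR e_{i_1}$, as does $f_{\sigma^{-1}}$. The identity $p_{j\gets i}|_{Z_i\cap Z_j} = \Id$ (which holds because $\omega_j\cdot w = 0$ for $w\in Z_j$) allows one to conclude that $\bar f_{\sigma^{-1}} = \bar f_\sigma^{-1}$, so
\[
\bar f_{\gamma'} \;=\; \bar f_\sigma^{-1}\circ\bar f_\gamma\circ\bar f_\sigma
\]
is conjugate to $\bar f_\gamma$ on $W$. Lifting back, $f_\gamma$ and $f_{\gamma'}$ have the same nonzero eigenvalues, so assuming $\theta\ne 0$ we obtain that $\theta$ is an eigenvalue of $f_{\gamma'}$. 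The case $\theta=0$ is handled separately: Theorem~\ref{theorem:convergence} applied to any path visiting every vertex produces stretch factors $\lambda_k\to\infty$ whose product of Galois conjugates has absolute value $1$, forcing some conjugate to tend to $0$.

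Having placed $\theta$ in the spectrum of $f_{\gamma'}$, I apply Theorem~\ref{theorem:convergence} to $\gamma'$: writing $u_k(x) = \chi(M_{\gamma',k})$, the convergence $u_k(x)/(x-\lambda_k)\to\chi(f_{\gamma'})$ gives a sequence $\theta_k\to\theta$ with $u_k(\theta_k)=0$. Each Dehn twist matrix $Q_i$ is unipotent of determinant $1$, so $\det M_{\gamma',k}=1$; by Gauss's lemma every root of $u_k$ is then an algebraic unit, and the hypothesis that $\theta$ is not a unit forces $\theta_k\ne\theta$. Theorem~\ref{theorem:convergence} then concludes that $\theta_k$ and $\lambda_k$ are Galois conjugates, so $\theta_k\in\GP(C)$ and $\theta = \lim\theta_k \in \overline{\GP(C)}$. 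The main obstacle is the spectral-conjugacy analysis: rigorously checking that the kernels of composed projections do not accumulate beyond $\RR e_{i_1}$ and that the induced maps on $W$ are genuinely mutually inverse bijections.
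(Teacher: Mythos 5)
Your proposal does not prove the statement in question. Theorem~\ref{theorem:convergence} is the convergence result itself: it asserts that for $M_{\gamma,k}=Q_{i_K}^k\cdots Q_{i_1}^k$ the quotients $u_k(x)/(x-\lambda_k)$ converge to $\chi(f_\gamma)$, and that roots $\theta_k\to\theta$ with $\theta_k\ne\theta$ are Galois conjugate to $\lambda_k$. Your argument takes this theorem as an input (``The plan is to reduce to Theorem~\ref{theorem:convergence}\dots'') and then deduces from it that a non-unit eigenvalue $\theta$ of $f_\gamma$ lies in $\overline{\GP(C)}$. That is a proof (in outline) of Theorem~\ref{theorem:Galois-conjugate-guarantee}, not of Theorem~\ref{theorem:convergence}; as an argument for the stated theorem it is circular. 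In the paper, Theorem~\ref{theorem:convergence} is not proved at all but imported as the combination of Lemma 1.2 and Theorem 3.1 of the earlier paper on algebraic degrees of Penner stretch factors. A genuine proof would have to work with the twist matrices $Q_i$ directly: analyze the asymptotics of the powers $Q_i^k$ (which converge, after normalization, to projections in the coordinate directions onto the hyperplanes $Z_i$), show that the characteristic polynomial of $M_{\gamma,k}$ asymptotically factors as the Perron--Frobenius factor $(x-\lambda_k)$ times a polynomial converging to $\chi(f_\gamma)$, and then derive the Galois-conjugacy claim from the behavior of the irreducible factor of $u_k$ containing $\lambda_k$. None of these ingredients appear in your proposal.

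As a side remark, the content you did write is essentially a hands-on variant of the paper's proof of Theorem~\ref{theorem:Galois-conjugate-guarantee}: where the paper invokes homotopy invariance of $\chi(f_\gamma)$ (Proposition 4.1 of the cited paper) to replace $\gamma$ by a path traversing every vertex, you conjugate by $f_\sigma$ and compare spectra on the quotient $Z_{i_1}/\RR\e_{i_1}$; you would also need to be careful there, since your claim $\ker f_\sigma=\RR\e_{i_1}$ is not justified in general (a long composition of projections can have larger kernel), and the paper's statement anyway concerns equality of characteristic polynomials rather than just nonzero spectra. But even repaired, that argument addresses the wrong theorem; for the statement actually posed, the missing content is the entire convergence analysis of $M_{\gamma,k}$.
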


We are now ready to prove \Cref{theorem:Galois-conjugate-guarantee}.

\begin{proof}[Proof of \Cref{theorem:Galois-conjugate-guarantee}]
  Since $\chi(f_\gamma)$ is invariant under homotopy of $\gamma$
  \cite[Proposition 4.1]{StrennerDegrees} and the graph $\bG(\Omega)$ is
  connected, we may assume that $\gamma = (i_1\ldots i_Ki_1)$ traverses every
  vertex. Then each matrix $M_{\gamma,k}$ corresponds to a pseudo-Anosov
  mapping class with stretch factor $\lambda_k$.

  The characteristic polynomials $u_k(x)$ of $M_{\gamma,k}$ are monic
  and have constant coefficient $\pm 1$, because the matrices $Q_i$
  are invertible. This can be seen directly from the definition of the
  matrices $Q_i$ in Section 2.3 of \cite{StrennerDegrees}. So the roots
  of $u_k(x)$ are algebraic units. 

  By the first part of \Cref{theorem:convergence}, there is sequence
  $\theta_k \to \theta$ such that $u(\theta_k) = 0$ for all $k$. Since
  $\theta$ is assumed not to be an algebraic unit, we have
  $\theta_k \ne \theta$ for all but finitely many $k$. By the second
  part of \Cref{theorem:convergence}, this implies that $\theta_k$ is
  a Galois conjugate of $\lambda_k$. So the number $\theta$ is indeed
  approximated by Galois conjugates of Penner stretch factors arising
  from the collection $C$.
\end{proof}

\section{Approximation of linear maps by compositions of projections}
\label{sec:approximation-of-linear-maps}

In this section we define \emph{rich} collections of curves and prove that if
$C$ is such a collection of curves, then we have $\overline{\GP(C)} = \CC$.
This will reduce our main theorem to the problem of constructing rich
collections of curves on various surfaces. First, we need the following definitions.

We define the \emph{cross-ratio} of a $2\times 2$ matrix
\begin{displaymath}
M =
\begin{pmatrix}
  m_{11} & m_{12} \\
  m_{21} & m_{22} \\
\end{pmatrix}
\end{displaymath}
by the formula $\crr(M) = \frac{m_{11}m_{22}}{m_{12}m_{21}}$. In order
for the cross-ratio to be defined, all matrices are assumed to
have positive entries throughout this section.

Denote by $\Rplusmult$ the multiplicative group of the positive reals.
The \emph{cross-ratio group} $\CRG(M)$ of a matrix $M$ is the subgroup
of $\Rplusmult$ generated by the cross-ratios of all $2\times 2$
submatrices of $M$. Note that any subgroup of $\Rplusmult$ is either
trivial, infinite cyclic or dense.

\begin{definition}
  We call a collection of curves $C = \{c_1,\ldots,c_n\}$ on a surface $S$
  \emph{rich} if
  \begin{itemize}
  \item $C$ fills $S$,
  \item $n\ge 6$ and $C_1 = \{c_1,c_2,c_3\}$ and
    $C_2 = \{c_4,c_5,c_6\}$ form multicurves,
  \item $i(C_1,C_2)$ has positive entries, rank 3 and dense
    cross-ratio group.
  \end{itemize}
\end{definition}

\begin{theorem}[Criterion for density of Galois
  conjugates]\label{theorem:dense-conjugates-criterion}
  If $C$ is a rich collection of curves on $S$, then
  $\overline{\GP(C)} = \CC$.
\end{theorem}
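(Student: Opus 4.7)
The plan is to deduce the theorem from \Cref{theorem:Galois-conjugate-guarantee}: it suffices to show that for every $\theta\in\CC$ and every $\epsilon>0$ one can find a closed path $\gamma$ in $\bG(\Omega)$ and an eigenvalue $\theta'$ of $f_\gamma$ with $|\theta-\theta'|<\epsilon$ and $\theta'$ not an algebraic unit. Since $C_1$ and $C_2$ are disjoint multicurves, the induced subgraph of $\bG(\Omega)$ on the six vertices of $C_1\cup C_2$ is the complete bipartite graph $K_{3,3}$. Every path considered below will lie in this $K_{3,3}$ and therefore alternate between $C_1$ and $C_2$; after cyclically reindexing $\gamma=(i_1 i_2\cdots i_K i_1)$ we may assume $i_1\in C_2$, which forces $K$ to be even.

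The first step is to locate a small invariant subspace that carries the essential nontrivial action of $f_\gamma$. Set $V=\mathrm{span}(\e_4,\e_5,\e_6)$ and $U=\mathrm{span}(\e_1,\e_2,\e_3)$. The vanishing of the two diagonal blocks of $\Omega$ on $C_1\sqcup C_2$, together with the positivity of $\Omega_{12}=i(C_1,C_2)$, yields three elementary facts: (a) $V\subset Z_i$ for $i\in C_2$ and $U\subset Z_i$ for $i\in C_1$; (b) if $v\in V$ and $j\in C_1$ then $p_{i\gets j}(v)=v$, since $v$ already satisfies the defining equation of $Z_i$ whenever $i\in C_2$; (c) if $v\in V$ and $j\in C_2$ then $p_{i\gets j}(v)\in V\cap Z_i$, a $2$-plane inside $V$. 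Applied along the alternating path $\gamma$, these force every other projection in $f_\gamma|_V$ to be the identity, while the remaining ones are linear isomorphisms $V\cap Z_{i_{2k}}\to V\cap Z_{i_{2k+2}}$ between $2$-planes. Hence $f_\gamma|_V$ has rank $2$, and its two nonzero eigenvalues are precisely the eigenvalues of the $2\times 2$ restriction of $f_\gamma$ to the invariant $2$-plane $W:=V\cap Z_{i_K}$.

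The heart of the proof, and the main obstacle, is the following approximation lemma: as $\gamma$ varies over alternating closed paths in $K_{3,3}$, the resulting matrices $f_\gamma|_W$ are dense in $\GL_2(\RR)$. The plan is to write each elementary $2$-plane projection in coordinates on $V$ adapted to $\Omega_{12}$, and observe that it is a linear isomorphism whose matrix entries are explicit rational functions of the entries of $\Omega_{12}$. A direct computation shows that the determinant of the $2\times 2$ map associated to a four-step loop $(i_1 i_2 i_3 i_4 i_1)$ equals a cross-ratio $\crr$ of a $2\times 2$ submatrix of $\Omega_{12}$; by composing such loops one obtains matrices in $\GL_2(\RR)$ whose determinants lie in the group $\CRG(\Omega_{12})$, which is dense in $\Rplusmult$ by the rich hypothesis. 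The rank $3$ assumption on $\Omega_{12}$ places the three $2$-planes $V\cap Z_1, V\cap Z_2, V\cap Z_3$ in general position inside $V$, furnishing enough non-commuting elementary projections that---once the determinants form a dense subgroup of $\Rplusmult$---arbitrary shears and rotations can also be realized, yielding density in $\GL_2(\RR)$.

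With $\GL_2(\RR)$-density in hand, the remainder is routine. Eigenvalues depend continuously on matrices, so the eigenvalues of $f_\gamma|_W$ form a dense subset of $\CC$. Because each entry of $f_\gamma|_W$ is a rational function of the integer entries of $\Omega$, each such eigenvalue is a root of a monic quadratic with rational coefficients, hence is an algebraic unit only on the codimension-one locus where $\det(f_\gamma|_W)=\pm 1$ (or where the eigenvalue equals $\pm 1$), which the dense supply of approximating matrices allows us to avoid. We may therefore approximate any prescribed $\theta\in\CC$ by a non-unit eigenvalue of some $f_\gamma$, and \Cref{theorem:Galois-conjugate-guarantee} then yields $\theta\in\overline{\GP(C)}$.
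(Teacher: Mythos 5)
Your overall architecture is the same as the paper's: restrict $f_\gamma$ to the invariant $3$-space $V=\langle \e_4,\e_5,\e_6\rangle$, observe that along an alternating path half the projections act as the identity while the others map the $2$-planes $V\cap Z_i$ ($i\in\{1,2,3\}$) to one another, reduce to the eigenvalues of the resulting $2\times 2$ maps, and finish with \Cref{theorem:Galois-conjugate-guarantee} after a routine avoidance of algebraic units. All of that matches Sections 3.1--3.3 and the final argument of the paper.

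The genuine gap is exactly where you say ``the heart of the proof'' is: the claim that the maps $f_\gamma|_W$ are dense in $\GL_2(\RR)$ is only asserted, not proved. The sentence ``the rank $3$ assumption places the three $2$-planes in general position, furnishing enough non-commuting elementary projections that \ldots arbitrary shears and rotations can also be realized'' is precisely the nontrivial content of the theorem, and general position plus dense determinants does not by itself yield density of the generated group. The paper's proof of this step occupies \Cref{prop:simple_composition}, \Cref{lemma:halfplane}, \Cref{lemma:two_halfplanes}, \Cref{prop:2by2_submatrices} and \Cref{lemma:quadratic_factors}: a four-step loop fixes the line $F=X_{i_1}\cap X_{i_2}$ (determined by a choice of two rows of $Y$) and stretches the line $L=X_{i_1}\cap\langle\e_{j_1},\e_{j_2}\rangle$ (determined by two columns) by the cross-ratio; shears are then obtained only via the limit $C_n=A_1^nA_2A_1^{-n}$ and the quotients $C_{n+1}C_n^{-1}$, which is how $\Fix(\vv,a)$ enters the closure; and one needs the combinatorial fact (\Cref{prop:2by2_submatrices}, relying on the cross-ratio identity of \Cref{lemma:crr_identity}) that the two generating submatrices can be chosen not to share two rows or two columns, so that the two fixed lines $F_2,F_3$ are distinct. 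None of this is in your sketch. A second, smaller error: density in $\GL_2(\RR)$ is impossible, since every $s_\gamma$ has determinant a product of cross-ratios of a positive matrix, hence positive; the correct and sufficient target is $\GLp(W)$, which is enough because $(x-\theta)(x-\bar\theta)$ always has positive constant term. Your concluding paragraph is essentially the paper's, but you should phrase the unit-avoidance as the paper does --- the algebraic units of degree at most $2$ form a discrete subset of $\CC$, so a sequence of approximating eigenvalues converging to a non-unit $\theta$ is eventually non-unit --- rather than appealing to a ``codimension-one locus,'' since the approximating matrices range over a countable set, not a manifold.
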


The following subsections develop material necessary for the proof. The proof will
be given at the end of the section.

\subsection{Bipartite subgraphs}
\label{sec:bipartite-subgraph}

Suppose $1\le k_1 < k_2 \le n$ and let
\begin{align*}
I &= \{1,\ldots,k_1\}\\
J &= \{k_1+1,\ldots,k_2\}.
\end{align*}
Suppose that $\omega_{ij} = 0$ whenever
$i,j \in I$ or $i,j \in J$, and $\omega_{ij} > 0$ when
$i \in I,j \in J$ or $j \in I,i\in J$. In other words, we assume that
the subgraph of $\bG(\Omega)$ spanned by the vertices $I \cup J$
is a complete bipartite graph. Compare the setting $I = \{1,2,3\}$ and $J =
\{4,5,6\}$ with the definition of rich collections of curves.

Define the subspace
\begin{displaymath}
V = \langle \e_{k_1+1},\ldots,\e_{k_2}\rangle
\end{displaymath}
generated by the standard basis vectors indexed by $J$. For $1 \le i \le k_1$,
the subspace
\begin{displaymath}
X_i = V \cap Z_i
\end{displaymath}
is a hyperplane in
$V$, because $Z_i$ is hyperplane in $\RR^n$ and
$V \not\subset Z_i$. When $i \in I$ and $j \in J$, the projection
$p_{i \gets j}$ restricts to $V$ and induces a projection
\begin{displaymath}
s_{i \gets j} : V \to X_i
\end{displaymath}
in the direction of $\e_j$. On the other hand, the restriction
$s_{j\gets i}$ of $p_{j \gets i}$ on $V$ is the identity.

Let $\gamma = (i_1j_1\dots i_Kj_Ki_1)$ be a closed path in
$\bG(\Omega)$, starting at $i_1 \in I$, and assume that it only
traverses the vertices in $I \cup J$. Then $f_\gamma$ induces a linear
endomorphism $s_\gamma: X_{i_1} \to X_{i_1}$ by the formula
\begin{displaymath}
  s_\gamma = (s_{i_1\gets j_{K}}\circ s_{j_K\gets i_{K}} \circ \cdots \circ
  s_{i_2\gets j_1} \circ s_{j_1\gets i_1})|_{X_{i_1}},
\end{displaymath}
which simplifies to
\begin{displaymath}
  s_\gamma = (s_{i_1\gets
    j_{K}} \circ \cdots \circ s_{i_3\gets j_2} \circ
  s_{i_2\gets j_1})|_{X_{i_1}},
\end{displaymath}
since the omitted terms are the identity maps. Since $s_\gamma$ is
simply the restriction of $f_\gamma$ to the invariant subspace
$X_{i_1}$, we have the following.
\begin{proposition}\label{prop:s-gamma-f-gamma}
  The characteristic polynomial of $s_\gamma$ divides the
  characteristic polynomial of $f_\gamma$.
\end{proposition}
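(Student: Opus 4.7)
The plan is to show directly that $s_\gamma$ is the restriction of $f_\gamma$ to the subspace $X_{i_1}$, where $X_{i_1}$ is $f_\gamma$-invariant, and then invoke the standard linear-algebra fact that the characteristic polynomial of a restriction to an invariant subspace divides the characteristic polynomial of the whole map.

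The main preparatory observation is that $V \subset Z_j$ for every $j \in J$. This follows from the bipartite hypothesis: if $j \in J$, then $\omega_{jk} = 0$ for all $k \in J$, so any vector of the form $\sum_{k \in J} x_k \e_k$ is orthogonal to the $j$th row of $\Omega$. As a consequence, each projection $p_{j \gets i}$ for $j \in J$ and $i \in I$ restricts to the identity on $V$. On the other hand, if $i \in I$ and $j \in J$, then for $x \in V$ the projection $p_{i \gets j}(x) = x - t\e_j$ lies in $V$ (because $\e_j \in V$) and in $Z_i$, hence in $X_i = V \cap Z_i$. This is exactly the map $s_{i \gets j}$.

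Writing out the composition defining $f_\gamma$ along the path $\gamma = (i_1 j_1 i_2 j_2 \cdots i_K j_K i_1)$ and applying it to an element of $X_{i_1} \subset V$, the $p_{j \gets i}$ factors act as the identity and the $p_{i \gets j}$ factors coincide with $s_{i \gets j}$. Thus $f_\gamma|_{X_{i_1}} = s_\gamma$, and in particular $X_{i_1}$ is an $f_\gamma$-invariant subspace of $Z_{i_1}$.

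Finally, pick a basis of $X_{i_1}$ and extend to a basis of $Z_{i_1}$. In this basis, the matrix of $f_\gamma$ has block upper-triangular form with $s_\gamma$ as the upper-left block. The characteristic polynomial therefore factors as $\chi(f_\gamma) = \chi(s_\gamma) \cdot q(x)$ for some polynomial $q$, which gives the desired divisibility. There is no real obstacle here; the only thing to watch is keeping track of which projections in the defining composition of $f_\gamma$ become identities and which become the $s_{i \gets j}$ when we restrict to $V$, which is exactly what the bipartite structure of $I \cup J$ ensures.
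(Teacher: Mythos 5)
Your proof is correct and follows exactly the paper's route: the paper likewise observes that the $p_{j\gets i}$ factors act as the identity on $V$, that the $p_{i\gets j}$ factors restrict to the $s_{i\gets j}$, and hence that $s_\gamma$ is the restriction of $f_\gamma$ to the invariant subspace $X_{i_1}$, from which the divisibility of characteristic polynomials is standard. You have merely filled in the details that the paper leaves implicit.
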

In other words, the eigenvalues of $s_\gamma$ form a subset of the
eigenvalues of $f_\gamma$. Thus having control over the eigenvalues of
$s_\gamma$ is useful for applying
\Cref{theorem:Galois-conjugate-guarantee}.

In the proof of \cite[Proposition 4.1]{StrennerDegrees}, it was shown that
$f_\gamma$ is invariant under homotopies of $\gamma$ that fix the last edge of
$\gamma$. This property is inherited by $s_\gamma$. In fact a stronger homotopy
invariance holds for $s_\gamma$: it is invariant under \emph{all} homotopies
fixing the base point $i_1$, without the assumption that the last edge of
$\gamma$ is fixed throughout the homotopy. To see this we only need to check
that the removal of the backtracking $i_Kj_Ki_1$, when $i_K=i_1$, from $\gamma$
does not change $s_\gamma$. This is because only the projection
$s_{i_1 \gets j_K}|_{X_{i_K}}$ is dropped from the composition, but it is a
projection from $X_{i_1}$ to $X_{i_1}$ so it does not have any effect.

As a consequence, the map $\gamma \mapsto s_\gamma$ induces a
well-defined map
\begin{displaymath}
\rho_{i_1}: \pi_1(G',i_1) \to \End(X_{i_1})
\end{displaymath}
where
$G'$ is the subgraph of $\bG(\Omega)$ spanned by the vertex set
$I \cup J$ and $\End(X_{i_1})$ is the set of linear endomorphisms of
$X_{i_1}$. Moreover, this map is an anti-homomorphism:
\begin{displaymath}
s_{\gamma_1*\gamma_2} = s_{\gamma_2} \circ s_{\gamma_1}.
\end{displaymath}
This
property reduces the computation of $s_\gamma$ for a long path
$\gamma$ to the computation of $s_\gamma$ for short paths $\gamma$. It
also shows that the image of $\rho_{i_1}$ is in fact in
$\GL(X_{i_1})$.

\subsection{An example}
\label{sec:line-example}

Let $I = {1,2}$ and $J = {3,4}$. The upper left $4\times 4$ submatrix
of $\Omega$ has the block form $
\begin{pmatrix}
  0 & Y \\
  Y^T & 0 \\ 
\end{pmatrix}$, where $Y =
\begin{pmatrix}
  a & b \\
  c & d \\
\end{pmatrix}
$
is a $2\times 2$ matrix with positive entries. Now $V$ is the
2-dimensional subspace generated by $\e_3$ and $\e_4$. The hyperplanes
$Z_1$ and $Z_2$ are the orthogonal complements of first and second
rows of $\Omega$. Hence $X_1$ and $X_2$ are the lines in $V$ with
equations $a\e_3 + b\e_4 = 0$ and $c\e_3 + d\e_4 = 0$. The slopes are
$-\frac{b}a$ and $-\frac{d}c$, respectively. The lines are illustrated on \Cref{fig:two_lines_proj}.

  \begin{figure}[ht]
    \centering
    \begin{tikzpicture}[scale=0.9]
      \newcommand\commonbase[5]{
        \draw[->] (-2,0) -- (4,0);
        \draw[->] (0,-4) -- (0,2); 
        \draw[step=1cm,very thin,densely dotted] (-1.5,1.5) grid (3.5,-3.5);
        \node[] at (2.5,1.6) {#1};
        \draw[very thick,->] (0,0) -- (1,0) node[above] {#2};
        \draw[very thick,->] (0,0) -- (0,1) node[right] {#3};
        \draw (-1.5,1.5) -- (3.5,-3.5) node[right,] {#4};
        \draw (-1.5,1) -- (3.5,-2.33) node[right,] {#5};
      }

      \begin{scope}
        \commonbase{$V$}{$\mathbf{e}_3$}{$\mathbf{e}_4$}{$X_1$}{$X_2$};
        \draw[thick,->] (2,-2) -- (3,-2);
        \draw[thick,->] (3,-2) -- (3,-3);
        \draw[thick,->,yshift=-3pt,xshift=-3pt] (2,-2) -- (3,-3);
      \end{scope}

    \end{tikzpicture}

    \caption{The map $s_\gamma\colon X_1 \to X_1$.}
    \label{fig:two_lines_proj}
  \end{figure}

  Let $\gamma$ be some non-contractible closed path of length 4 in the
  subgraph $G'$ of $\bG(\Omega)$ spanned by $I \cup J$. For instance, let
  $\gamma = (14231)$. Then
  $s_\gamma = s_{1 \gets 3}|_{X_2} \circ s_{2\gets 4}|_{X_1}$. The
  projection $s_{2\gets 4}|_{X_1} : X_1 \to X_2$ changes only the
  $\e_4$-coordinate of points, and it changes it by a factor
  $\frac{ad}{bc}$ which is the ratio of the slopes of $X_2$ and $X_1$.
  The projection $s_{1 \gets 3}|_{X_2}$ then projects back onto $X_1$
  without changing the $\e_4$-coordinate. Hence the composition is a
  scaling of the line $X_1$ by a factor $\frac{ad}{bc}$. Therefore
  $\frac{ad}{bc}$ is an eigenvalue of $s_\gamma$ and $f_\gamma$.

The graph $G'$ is topologically a circle and
$\rho_1: \pi_1(G',1) \to \End(X_1)$ maps the generator of the infinite
cyclic group to the scaling of $X_1$ by $\frac{ad}{bc}$. As a consequence,
the eigenvalues of $s_\gamma$ for closed paths $\gamma$ in $G'$ are
precisely the integer powers of $\frac{ad}{bc}$.


\subsection{Short closed paths}
\label{sec:short_closed_paths}

The content of this section is a geometric description of $s_\gamma$
when $\gamma$ is a path of length 4. \Cref{sec:line-example}
discussed the simple special case when $G'$ is a graph on four
vertices. In this section we allow $G'$ to be bigger, hence $V$ to
have dimension greater than 2.

Consider a closed path $(i_1j_1i_2j_2i_1)$ where
$i_1,i_2\in I$ and $j_1,j_2 \in J$. Associated to $(i_1j_1i_2j_2i_1)$
are the following data:
\begin{itemize}
\item $\Omega_{i_1i_2}^{j_1j_2} =
  \begin{pmatrix}
    \omega_{i_1j_1} & \omega_{i_1j_2} \\
    \omega_{i_2j_1} & \omega_{i_2j_2} \\
  \end{pmatrix}$,
\item $c_{i_1i_2}^{j_1j_2} = \crr(\Omega_{i_1i_2}^{j_1j_2})$, and
\item
  $s_{i_1i_2}^{j_1j_2} = s_{(i_1j_1i_2j_2i_1)} = s_{i_1 \gets
    j_2}|_{X_{i_2}} \circ s_{i_2 \gets j_1}|_{X_{i_1}}\in
  \GL(X_{i_1})$.
\end{itemize}

Note that $\big(s_{i_1i_2}^{j_1j_2}\big)^{-1} = s_{i_1i_2}^{j_2j_1}$,
since $(i_1j_1i_2j_2i_1)$ and $(i_1j_2i_2j_1i_1)$ represent inverse
elements in $\pi_1(G',i_1)$. So the monoid generated by the linear
maps $s_{i_1i_2}^{j_1j_2} \in \GL(X_{i_1})$, where
$(i_1j_1i_2j_2i_1)$ runs through all closed paths of length 4 in
$G'$ with base point $i_1$, is actually a group.

\begin{proposition}\label{prop:simple_composition}
  Suppose $(i_1j_1i_2j_2i_1)$ is a homotopically nontrivial closed
  path in $G'$. Then
  \begin{enumerate}[(i)]
  \item\label{item:identity_action} $s_{i_1i_2}^{j_1j_2}$ acts as the
    identity on $F = X_{i_1} \cap X_{i_2}$;
  \item\label{item:intersection_line} The 2-dimensional subspace
    $\langle \e_{j_1}, \e_{j_2} \rangle$ is not contained in the
    hyperplane $X_{i_1}$, therefore
    $L = X_{i_1} \cap \langle \e_{j_1}, \e_{j_2} \rangle$ is a line;
  \item\label{item:stretch_line} $s_{i_1i_2}^{j_1j_2}$ stretches the
    line $L$ by a factor of $c_{i_1i_2}^{j_1j_2}$.
  \end{enumerate}
  Moreover, if $c_{i_1i_2}^{j_1j_2} \ne 1$, then $F$ is a codimension
  1 subspace in $X_{i_1}$, and we have $X_{i_1} = F \oplus L$.
\end{proposition}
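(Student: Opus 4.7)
The plan is to prove the three items in order and then the moreover clause, using straightforward linear-algebraic arguments at each step.

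For (\ref{item:identity_action}), I would invoke the elementary fact that a projection onto a subspace $W'$ restricts to the identity on $W'$. Any vector $v \in F$ lies in $X_{i_1}$, so the outer projection $s_{i_1\gets j_2}|_{X_{i_2}}$ fixes it; and $v$ also lies in $X_{i_2}$, so the inner projection $s_{i_2\gets j_1}|_{X_{i_1}}$ fixes it as well. Composing gives $s_{i_1i_2}^{j_1j_2}(v)=v$.

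For (\ref{item:intersection_line}), observe that $X_{i_1}=V\cap Z_{i_1}$ is a hyperplane in $V$ because $\e_{j_1} \in V\setminus Z_{i_1}$ (using $\omega_{i_1 j_1}>0$, which holds in our bipartite setting). The same inequality shows $\e_{j_1}\notin X_{i_1}$, so the $2$-plane $\langle \e_{j_1},\e_{j_2}\rangle$ is not contained in $X_{i_1}$, and its intersection $L$ with the hyperplane $X_{i_1}$ of $V$ is a line.

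For (\ref{item:stretch_line}), the cleanest approach is a direct coordinate calculation. Parametrize $L$ by $w=\alpha\e_{j_1}+\beta\e_{j_2}$ subject to $\alpha\omega_{i_1 j_1}+\beta\omega_{i_1 j_2}=0$. The projection $s_{i_2\gets j_1}$ subtracts a multiple of $\e_{j_1}$ to enforce the $i_2$-row orthogonality, producing a vector whose $\e_{j_1}$-coefficient becomes $-\beta\omega_{i_2 j_2}/\omega_{i_2 j_1}$ while the $\e_{j_2}$-coefficient stays at $\beta$. Then $s_{i_1\gets j_2}$ subtracts a multiple of $\e_{j_2}$ to restore the $i_1$-row orthogonality, turning the $\e_{j_2}$-coefficient into $\beta\cdot c_{i_1i_2}^{j_1j_2}$. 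Using the defining relation $\alpha=-\beta\omega_{i_1 j_2}/\omega_{i_1 j_1}$, the $\e_{j_1}$-coefficient simplifies to $\alpha\cdot c_{i_1i_2}^{j_1j_2}$, so the output equals $c_{i_1i_2}^{j_1j_2}\cdot w$. This is the same computation as in \Cref{sec:line-example}, just carried out inside a possibly larger ambient $V$.

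For the moreover clause, note that $c_{i_1i_2}^{j_1j_2}\ne 1$ is equivalent to $\Omega_{i_1i_2}^{j_1j_2}$ being nonsingular, which forces the restrictions of the $i_1$-th and $i_2$-th rows of $\Omega$ to the coordinates indexed by $J$ to be linearly independent functionals on $V$. Hence $F=X_{i_1}\cap X_{i_2}$ has codimension $2$ in $V$, i.e. codimension $1$ in $X_{i_1}$. Since $s_{i_1i_2}^{j_1j_2}$ acts as the identity on $F$ by (\ref{item:identity_action}) and as scaling by $c_{i_1i_2}^{j_1j_2}\ne 1$ on $L$ by (\ref{item:stretch_line}), the subspaces $F$ and $L$ are eigenspaces for distinct eigenvalues and hence $F\cap L=0$. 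A dimension count finishes $X_{i_1}=F\oplus L$. The main obstacle is keeping the signs straight in (\ref{item:stretch_line}) so that the scaling factor resolves cleanly to $\omega_{i_1 j_1}\omega_{i_2 j_2}/(\omega_{i_2 j_1}\omega_{i_1 j_2})$; once that calculation is in hand, everything else is essentially formal.
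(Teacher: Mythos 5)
Your argument is correct and follows essentially the same route as the paper: the observation that a projection onto a subspace restricts to the identity there for (\ref{item:identity_action}), the explicit computation on a spanning vector of $L$ for (\ref{item:stretch_line}), and the nonsingularity of $\Omega_{i_1i_2}^{j_1j_2}$ giving $X_{i_1}\ne X_{i_2}$ for the direct-sum statement. The one small omission is in (\ref{item:intersection_line}): you should note that $j_1\ne j_2$ --- this is precisely where the homotopical nontriviality of the path is used --- since otherwise $\langle \e_{j_1},\e_{j_2}\rangle$ would not be $2$-dimensional and $L$ would be $\{0\}$ rather than a line.
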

\begin{proof}
  We first prove statement (\ref{item:identity_action}). The map
  $s_{i_1i_2}^{j_1j_2}$ is the composition of a projection from
  $X_{i_2}$ to $X_{i_1}$ and a projection from $X_{i_1}$ to $X_{i_2}$,
  both of which act on $X_{i_1} \cap X_{i_2}$ as the identity.

  For part (\ref{item:intersection_line}), note that $j_1\ne j_2$, otherwise
  $(i_1j_1i_2j_2i_1)$ is contractible. Hence
  $\langle \e_{j_1}, \e_{j_2}\rangle$ is a 2-dimensional subspace.
  Since $\omega_{i_1j_1} \ne 0$ and $\omega_{i_1j_2} \ne 0$, the
  vectors $\e_{j_1}$ and $\e_{j_2}$ are not orthogonal to the row
  vector $\e_{i_1}^T \Omega$, hence they are not contained in
  $X_{i_1}$.

  For part (\ref{item:stretch_line}), observe that the line
  $X_{i_1} \cap \langle \e_{j_1}, \e_{j_2} \rangle$ is generated by
  $\vv = \omega_{i_1j_2} \e_{j_1} - \omega_{i_1j_1} \e_{j_2}$. Since
  $s_{i_2 \gets j_1}$ is a projection on $X_{i_2}$ in the direction of
  $\e_{j_1}$, we have
  \begin{displaymath}
    s_{i_2 \gets j_1} (\vv) = \frac{\omega_{i_1j_1}}{\omega_{i_2j_1}}(\omega_{i_2j_2} \e_{j_1} - \omega_{i_2j_1} \e_{j_2}).
  \end{displaymath}
  Similarly,
  \begin{displaymath}
    s_{i_1 \gets j_2}(s_{i_2 \gets j_1} (\vv)) = \frac{\omega_{i_1j_1}\omega_{i_2j_2}}{\omega_{i_2j_1}\omega_{i_1j_2}}(\omega_{i_1j_2} \e_{j_1} - \omega_{i_1j_1} \e_{j_2}) =
    c_{i_1i_2}^{j_1j_2} \vv.
  \end{displaymath}

  Finally, the condition $c_{i_1i_2}^{j_1j_2} \ne 1$ implies that the rows of
  $\Omega_{i_1i_2}^{j_1j_2}$ are not constant
  multiples of each other, hence $X_{i_1} \ne X_{i_2}$. Since both
  $X_{i_1}$ and $X_{i_2}$ are hyperplanes in $V$, their intersection
  has codimension 1 in $X_{i_1}$. 
\end{proof}

In summary, the linear map $s_\gamma$ takes a very simple form when
$\gamma = (i_1j_1i_2j_2i_1)$ is a closed path with
$c_{i_1i_2}^{j_1j_2} \ne 1$: it fixes a hyperplane and stretches a
line by the positive factor $c_{i_1i_2}^{j_1j_2}$. In the next section
we consider these linear maps as building blocks for constructing
more complicated linear maps.

\subsection{Linear endomorphisms of planes}

Let $W$ be a 2-dimensional vector space over $\RR$. For any
$\vv \in W$ and $a > 0$ denote by 
\begin{itemize}
\item $\Fix(\vv,a) \subset \GLp(W)$ the group of linear maps fixing
  the vector $\vv$ and having determinant $a^n$ for some integer $n$;
\item $\Fix(\vv) \subset \GLp(W)$ the group of linear maps
  fixing the vector $\vv$ and having positive determinant.
\end{itemize}

\begin{lemma}\label{lemma:halfplane}
  Let $\vv\in W$ and $f_1,f_2\in \Fix(\vv)$. Suppose that $f_1$ and
  $f_2$ have linearly independent eigenvectors $\ww_1$ and $\ww_2$ with
  eigenvalues $a_1 >1$ and $a_2 >1$, respectively. Then $\Fix(\vv,a_1)$
  and $\Fix(\vv,a_2)$ are contained in the closure
  $\overline{\langle f_1, f_2 \rangle} \subset \GLp(W)$.
\end{lemma}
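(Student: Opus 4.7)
I would begin by fixing a basis adapted to $\vv$ and exploiting the solvable structure of $\Fix(\vv)$. Because $f_1 \vv = \vv$ while $f_1 \ww_1 = a_1 \ww_1$ with $a_1 > 1$, the vectors $\vv, \ww_1$ are eigenvectors of $f_1$ for distinct eigenvalues and hence form a basis of $W$; the same reasoning gives independence of $\vv, \ww_2$. Working in the basis $\{\vv, \ww_1\}$, the matrix of $f_1$ is $\begin{pmatrix}1 & 0 \\ 0 & a_1\end{pmatrix}$, and writing $\ww_2 = \alpha \vv + \beta \ww_1$, the hypothesis that $\{\ww_1,\ww_2\}$ is independent forces $\alpha \ne 0$ while independence of $\{\vv,\ww_2\}$ forces $\beta \ne 0$. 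A short calculation from $f_2 \vv = \vv$ and $f_2 \ww_2 = a_2 \ww_2$ then yields $f_2 = \begin{pmatrix}1 & b \\ 0 & a_2\end{pmatrix}$ with $b = \alpha(a_2-1)/\beta \ne 0$.

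Next I would use the fact that, in these coordinates, $\Fix(\vv)$ is the upper-triangular subgroup of $\GLp(W)$ fixing the first basis vector, and its unipotent subgroup $N' = \left\{\begin{pmatrix} 1 & t \\ 0 & 1 \end{pmatrix} : t \in \RR\right\}$ is normal with abelian quotient $\Fix(\vv)/N' \cong \Rplusmult$. In particular $[f_1, f_2] \in N'$. Comparing the $(1,2)$-entries of $f_1 f_2$ and $f_2 f_1$, which equal $b$ and $b a_1$ respectively, shows $f_1 f_2 \ne f_2 f_1$ since $b \ne 0$ and $a_1 > 1$, so $[f_1, f_2] = \begin{pmatrix} 1 & b' \\ 0 & 1 \end{pmatrix}$ for some nonzero $b' \in \RR$.

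Finally, a direct calculation shows that conjugation by $\begin{pmatrix} 1 & c \\ 0 & d \end{pmatrix}$ rescales any shear by the factor $1/d$, so conjugating $[f_1, f_2]$ by $f_1^n$ produces the shear with $(1,2)$-entry $b' a_1^{-n}$ inside $\langle f_1, f_2\rangle$ for every $n \in \ZZ$. Since $a_1 > 1$, the set $\{b' a_1^{-n} : n \in \ZZ\}$ accumulates at $0$ in $\RR$, and any nontrivial additive subgroup of $\RR$ that accumulates at $0$ is dense; because the closure of a subgroup of a topological group is again a subgroup, this gives $N' \subset \overline{\langle f_1, f_2\rangle}$. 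Left-multiplying by $f_1^n \in \langle f_1, f_2\rangle$ then sweeps out the coset $\left\{\begin{pmatrix}1 & t \\ 0 & a_1^n\end{pmatrix} : t \in \RR\right\}$ for each $n$, whose union over $\ZZ$ is precisely $\Fix(\vv, a_1)$; the parallel argument with $f_2^n$ yields $\Fix(\vv, a_2)$. The main subtlety is ensuring $[f_1, f_2] \ne 1$---equivalently, that $b \ne 0$---which is where the independence hypotheses and the condition $a_2 \ne 1$ enter; thereafter the density of shears drops out of $a_1 > 1$ by a single accumulation argument.
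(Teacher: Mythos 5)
Your proof is correct and follows essentially the same route as the paper: the elements $f_1^n[f_1,f_2]f_1^{-n}$ you produce are exactly the paper's $C_{n+1}C_n^{-1}=A_1^n(A_1A_2A_1^{-1}A_2^{-1})A_1^{-n}$, written in the transposed (upper-triangular) convention, and both arguments then conclude by density of the resulting shears in the unipotent subgroup followed by multiplication with powers of $f_i$. Your verification that $b\ne 0$ via the two independence hypotheses is a welcome bit of extra care that the paper leaves implicit.
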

\begin{proof}
  In the basis $(\ww_1,\vv)$ the maps $f_1$ and $f_2$ are described by
  the matrices
  \begin{displaymath}
  A_1 =
  \begin{pmatrix}
    a_1 & 0 \\
    0 & 1 \\
  \end{pmatrix}
  \quad
  \mbox{and} 
  \quad
  A_2 =
  \begin{pmatrix}
    a_2 & 0 \\
    b & 1 \\
  \end{pmatrix}
\end{displaymath}
where $b\ne 0$. Define
  \begin{displaymath}
    C_n = A_1^n A_2 A_1^{-n} =
    \begin{pmatrix}
      a_2 & 0 \\
      \frac{b}{a_1^n} & 1 \\
    \end{pmatrix}
  \end{displaymath}
  and let 
  \begin{displaymath}
    C_{n+1}C_n^{-1} =
    \begin{pmatrix}
      1 & 0 \\
      \frac{b(1-a_1)}{a_1^{n+1}a_2} & 1 \\
    \end{pmatrix}.
  \end{displaymath}
  The bottom left entry of $C_{n+1}C_n^{-1}$ tends to zero, therefore
  powers of $C_{n+1}C_n^{-1}$ are dense in the subgroup of matrices of
  the form
  \begin{displaymath}
    \begin{pmatrix}
      1 & 0 \\
      * & 1 \\
    \end{pmatrix}.
  \end{displaymath}

  Finally, note that multiplying these matrices by powers of $A_i$
  yields everything in
  \begin{displaymath}
    \Fix(\vv,a_i) =\left\{
      \begin{pmatrix}
        a_i^n & 0 \\
        t & 1 \\
      \end{pmatrix}
      : n \in \ZZ, t\in \RR
    \right\}
  \end{displaymath}
  for $i = 1,2$.
\end{proof}

\begin{lemma}\label{lemma:two_halfplanes}
  Let $\vv_1,\vv_2 \in W$ be linearly independent vectors, and let
  $a_1,a_2 >0$ such that $\langle a_1,a_2 \rangle$ is dense in
  $\Rplusmult$. Then
  \begin{displaymath}
    \overline{\langle \Fix(\vv_1,a_1),\Fix(\vv_2,a_2) \rangle} =
    \GLp(W).
\end{displaymath}

\end{lemma}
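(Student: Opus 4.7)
The plan is to fix the basis $(\vv_1,\vv_2)$ of $W$, in which the two subgroups take the explicit matrix forms
\begin{displaymath}
  \Fix(\vv_1,a_1)=\left\{\begin{pmatrix}1 & s\\ 0 & a_1^n\end{pmatrix} : s\in\RR,\ n\in\ZZ\right\} \AND \Fix(\vv_2,a_2)=\left\{\begin{pmatrix}a_2^m & 0\\ t & 1\end{pmatrix}: t\in\RR,\ m\in\ZZ\right\}.
\end{displaymath}
Setting $n=0$ (respectively $m=0$) exhibits the full upper (respectively lower) unipotent one-parameter subgroup of $\SL_2(\RR)$ inside $G\defeq \langle\Fix(\vv_1,a_1),\Fix(\vv_2,a_2)\rangle$. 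The first step is to invoke the classical fact (a short Gauss elimination) that these two one-parameter subgroups generate all of $\SL_2(\RR)=\SL(W)$; this gives $\SL(W)\subseteq G$ without taking any closure.

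For the second step, setting instead $s=0$ and $t=0$ produces the diagonal matrices with entries $(1,a_1^n)$ and $(a_2^m,1)$ inside $G$, whose product has diagonal $(a_2^m,a_1^n)$ and determinant $a_1^n a_2^m$. Combined with $\SL(W)\subseteq G$, this shows that $G$ already contains every element of $\GLp(W)$ whose determinant lies in the multiplicative subgroup $\langle a_1,a_2\rangle\subset\Rplusmult$.

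For the third and final step, I take an arbitrary $g\in\GLp(W)$ and use density of $\langle a_1,a_2\rangle$ in $\Rplusmult$ to pick a sequence $r_k\in\langle a_1,a_2\rangle$ with $r_k\to\det(g)$. The rescaled matrices $g_k \defeq \sqrt{r_k/\det(g)}\cdot g$ satisfy $\det(g_k)=r_k$, hence $g_k\in G$ by the previous step, and $g_k\to g$ since $\sqrt{r_k/\det(g)}\to 1$. Therefore $g\in\overline{G}$, proving $\overline{G}=\GLp(W)$.

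I do not anticipate a genuine obstacle. The only nontrivial input is the classical generation of $\SL_2(\RR)$ by its two unipotent one-parameter subgroups; the remainder is a routine density argument passed through the determinant homomorphism.
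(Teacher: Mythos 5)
Your proof is correct. It differs from the paper's argument in its overall decomposition. You split $\GLp(W)$ along the determinant homomorphism: the two unipotent one-parameter subgroups (the $n=0$, resp.\ $m=0$, slices of $\Fix(\vv_1,a_1)$ and $\Fix(\vv_2,a_2)$) generate all of $\SL(W)$ by Gaussian elimination, the diagonal slices supply every determinant in $\langle a_1,a_2\rangle$, and density is invoked only once at the very end to rescale an arbitrary $g$. The paper instead runs a transitivity-plus-stabilizer argument: it first shows the generated group $H$ acts transitively on nonzero vectors, then conjugates $\Fix(\vv_1,a_1)$ to obtain $\Fix(\vv_2,a_1)\subset H$, uses density of $\langle a_1,a_2\rangle$ to conclude $\Fix(\vv_2)\subset\overline{H}$, and finally factors any $f\in\GLp(W)$ as an element of $H$ moving $\vv_2$ to $f(\vv_2)$ composed with an element of the stabilizer of $\vv_2$. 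Your route has the advantage of isolating exactly where density enters (only in approximating $\det g$) and yields the slightly stronger intermediate statement that, before taking closures, $H$ already contains every map with determinant in $\langle a_1,a_2\rangle$; the paper's route avoids appealing to the $\SL_2$ generation fact and stays closer in style to the preceding \Cref{lemma:halfplane}, reusing the $\Fix(\vv)$-subgroup formalism. Both arguments are complete modulo one standard input each (generation of $\SL_2(\RR)$ by elementary matrices for you; the density of $\overline{\langle \Fix(\vv_2,a_1),\Fix(\vv_2,a_2)\rangle}$ in $\Fix(\vv_2)$, asserted without detail, for the paper).
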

\begin{proof}
  Under the action of $\Fix(\vv_1,a_1)$, the orbit of any vector that
  is not a constant multiple of $\vv_1$ is a collection of lines
  parallel to $\vv_1$. A similar statement is true for
  $\Fix(\vv_2,a_2)$. Since $\vv_1$ and $\vv_2$ are linearly independent,
  it follows that
  \begin{displaymath}
    H = \langle \Fix(\vv_1,a_1),\Fix(\vv_2,a_2) \rangle
  \end{displaymath}
  acts transitively on nonzero vectors.

  Conjugating $\Fix(\vv_1,a_1)$ by an element of $H$ mapping $\vv_1$
  to $\vv_2$ shows that $\Fix(\vv_2,a_1) \subset H$. Since
  $\langle a_1,a_2 \rangle$ is dense in $\Rplusmult$, we have
  \begin{displaymath}
    \overline {\langle \Fix(\vv_2,a_1), \Fix(\vv_2,a_2) \rangle} = \Fix(\vv_2)
  \end{displaymath}
  and
  $\Fix(\vv_2) \subset \overline{H}$. 

  Finally, notice that any $f\in \GLp(W)$ can be written as
  $f_2\circ f_1$ where $f_1 \in H$ sends $\vv_2$ to $f(\vv_2)$ and
  $f_2 \in \Fix(\vv_2)$. Hence $\overline{H} = \GLp(W)$.
\end{proof}

\subsection{Cross-ratio groups of matrices}

The final ingredient for the proof of \Cref{theorem:dense-conjugates-criterion}
is \Cref{lemma:quadratic_factors} below, which relates dense cross-ratio groups to the
density of eigenvalues of the maps $f_\gamma$ in the complex plane.

\begin{lemma}\label{lemma:crr_identity}
  If $M$ is a $2\times 3$ matrix and $M_i$ denotes its $2\times
  2$ submatrix obtained by deleting the $i$th column, then
  $\crr(M_1)\crr(M_3)  = \crr(M_2).$
\end{lemma}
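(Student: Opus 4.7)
The plan is to prove the identity by direct computation, since it is a purely algebraic statement about a $2\times 3$ matrix of positive real numbers. Write
\begin{displaymath}
M = \begin{pmatrix} m_{11} & m_{12} & m_{13} \\ m_{21} & m_{22} & m_{23} \end{pmatrix},
\end{displaymath}
and list the three $2\times 2$ submatrices $M_1$, $M_2$, $M_3$ obtained by deleting the first, second, and third column respectively. Then apply the definition of cross-ratio to each; the formulas will involve the six entries of $M$ in a transparent way.

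The key observation is that the entries $m_{12}$ and $m_{22}$ (the deleted column in $M_2$) appear in $\crr(M_1)$ and $\crr(M_3)$ in reciprocal positions with respect to the fraction bar, so they cancel in the product $\crr(M_1)\crr(M_3)$. What remains involves only the entries of $M_2$, assembled in exactly the pattern giving $\crr(M_2)$. I would then finish with a single line of computation exhibiting the cancellation explicitly, e.g.\
\begin{displaymath}
\crr(M_1)\crr(M_3) \;=\; \frac{m_{12}m_{23}}{m_{13}m_{22}}\cdot \frac{m_{11}m_{22}}{m_{12}m_{21}} \;=\; \frac{m_{11}m_{23}}{m_{13}m_{21}} \;=\; \crr(M_2).
\end{displaymath}

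There is essentially no obstacle here: all entries are positive, so there is no issue of dividing by zero or orientation, and no subtle combinatorics is needed. The only thing to be careful about is tracking which column is being deleted so that the entries $m_{12}, m_{22}$ do indeed cancel rather than, say, the wrong pair $m_{11}, m_{21}$. This is an easy bookkeeping check from the definition of $\crr$.
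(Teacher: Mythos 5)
Your computation is correct and is exactly the paper's proof: the paper also just writes out $\crr(M_1)\crr(M_3) = \frac{m_{12}m_{23}}{m_{22}m_{13}}\cdot\frac{m_{11}m_{22}}{m_{12}m_{21}} = \frac{m_{11}m_{23}}{m_{13}m_{21}} = \crr(M_2)$. Nothing to add.
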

\begin{proof}
  $\crr(M_1)\crr(M_3) = \frac{m_{12}m_{23}}{m_{22}m_{13}}\cdot
    \frac{m_{11}m_{22}}{m_{12}m_{21}} = \frac{m_{11}m_{23}}{m_{13}m_{21}} = \crr(M_2).$
\end{proof}

\begin{corollary}\label{cor:crr_in_a_row}
  If $M$ is a $2\times 3$ matrix with nontrivial cross-ratio group,
  then the cross-ratio of at least two of its $2\times 2$ submatrices
  is not 1.
\end{corollary}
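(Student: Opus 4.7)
The plan is to leverage \Cref{lemma:crr_identity}, which provides the multiplicative identity $\crr(M_1)\crr(M_3) = \crr(M_2)$ among the three cross-ratios of the $2 \times 2$ submatrices of $M$. Since the cross-ratio group $\CRG(M)$ is generated by these three cross-ratios, the assumption that $\CRG(M)$ is nontrivial means that at least one of $\crr(M_1), \crr(M_2), \crr(M_3)$ is not equal to $1$. The goal is to rule out the possibility that exactly one of them is not $1$.

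I would argue by contradiction: suppose exactly one of the three cross-ratios differs from $1$. There are three cases, one for each index $i \in \{1,2,3\}$ playing the role of the nontrivial cross-ratio. In each case, plugging the assumed values into the identity $\crr(M_1)\crr(M_3) = \crr(M_2)$ forces the remaining nontrivial cross-ratio to equal $1$, which contradicts the choice. For instance, if $\crr(M_2) = \crr(M_3) = 1$, then $\crr(M_1) = \crr(M_2)/\crr(M_3) = 1$; the other two cases are analogous.

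There is no real obstacle here: the argument is essentially a short case analysis built on top of \Cref{lemma:crr_identity}. The only mild subtlety is making sure the three cases are handled symmetrically, which is immediate from the multiplicative form of the identity.
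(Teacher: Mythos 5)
Your argument is correct and matches the paper's intent: the corollary is stated without proof precisely because it follows immediately from the identity $\crr(M_1)\crr(M_3)=\crr(M_2)$ of \Cref{lemma:crr_identity}, exactly as in your case analysis. Nothing is missing.
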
        

\begin{proposition}\label{prop:2by2_submatrices}
  Let $M$ be a $3\times 3$ matrix of full rank and dense cross-ratio
  group. Then $M$ has two $2\times 2$ submatrices with the following
  two properties:
  \begin{enumerate}[(i)]
  \item\label{item:notadjacent} they are not contained in the same two
    rows or the same two columns
  \item\label{item:dense} their cross-ratios generate a dense subgroup
    of $\Rplusmult$.
  \end{enumerate}
\end{proposition}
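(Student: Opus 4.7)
The plan is to pass to logarithms and reduce the statement to a question about a $\QQ$-vector space. Setting $\beta_{ij} = \log \crr(S_{ij})$ for the $2 \times 2$ submatrix $S_{ij}$ of $M$ obtained by deleting row $i$ and column $j$, I observe that every subgroup of $\RR$ is trivial, cyclic, or dense, so $\CRG(M)$ is dense in $\Rplusmult$ if and only if $\dim_\QQ \mathrm{span}_\QQ(\beta_{ij}) \ge 2$ (viewing $\RR$ as a $\QQ$-vector space). The same principle shows that two cross-ratios $\crr(S_{ij})$ and $\crr(S_{i'j'})$ generate a dense subgroup of $\Rplusmult$ if and only if the corresponding $\beta_{ij}$ and $\beta_{i'j'}$ are $\QQ$-linearly independent, so conclusion \emph{(ii)} amounts to producing a diagonal pair (in the sense of \emph{(i)}) with $\QQ$-independent $\beta$-values.

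Applying \Cref{lemma:crr_identity} to the three $2 \times 3$ row submatrices and the three $3 \times 2$ column submatrices of $M$ yields additive relations that let me express the remaining five $\beta_{ij}$ in terms of the four corner values $x = \beta_{11}$, $y = \beta_{12}$, $z = \beta_{21}$, $w = \beta_{22}$; namely, $\beta_{13} = y - x$, $\beta_{23} = w - z$, $\beta_{31} = z - x$, $\beta_{32} = w - y$, and $\beta_{33} = x - y - z + w$. Density is therefore equivalent to $\dim_\QQ \mathrm{span}_\QQ(x, y, z, w) \ge 2$.

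Arguing by contradiction, I assume every diagonal pair has $\QQ$-dependent $\beta$-values and aim to show that $\mathrm{span}_\QQ(x, y, z, w)$ has dimension at most $1$. The pairs $(1,1), (2,2)$ and $(1,2), (2,1)$ immediately give that $\{x, w\}$ and $\{y, z\}$ are each $\QQ$-dependent. When all four corners are nonzero, I may write $w = c_1 x$ and $z = c_2 y$ with $c_1, c_2 \in \QQ$; the pair $(1,1), (3,2)$ then forces $w - y = c_1 x - y$ to be a $\QQ$-multiple of $x$, giving $y \in \QQ x$ and collapsing the span to a line. When some corner vanishes (by the symmetry of $(x,y,z,w)$ under row/column swaps of $M$, WLOG take $x = 0$), the pairs $(1,3), (2,2)$, $(2,2), (3,1)$, and $(1,2), (2,1)$ translate into dependence of $\{y, w\}$, of $\{w, z\}$, and of $\{y, z\}$, placing the nonzero elements of $\{y, z, w\}$ on a single $\QQ$-line. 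In every case this contradicts $\dim_\QQ \mathrm{span}_\QQ(x,y,z,w) \ge 2$.

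The principal obstacle will be the bookkeeping in the case analysis: verifying that for each vanishing pattern among $(x, y, z, w)$ the invoked diagonal pairs are numerous enough to force a one-dimensional span. Once the contradiction is reached, the witnessing diagonal pair produces two $2 \times 2$ submatrices satisfying \emph{(i)} by construction and \emph{(ii)} by the initial reformulation of density in terms of $\QQ$-linear independence.
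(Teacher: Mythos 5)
Your proof is correct, and it reaches the conclusion by a genuinely different route than the paper. The paper gives a three-line exchange argument: take any two submatrices whose cross-ratios generate a dense subgroup; if they already lie in distinct row-pairs and distinct column-pairs, stop; otherwise (say they share their columns) swap the second for one of the other two submatrices in its row-pair, which must work because \Cref{lemma:crr_identity} shows that if both alternatives were rational powers of the first cross-ratio then so would be the cross-ratio of the discarded submatrix, contradicting density. You instead linearize: passing to logarithms, you apply \Cref{lemma:crr_identity} to all row and column strips to parametrize the nine values $\beta_{ij}$ by the four corners $x,y,z,w$, recast both density and condition (ii) as $\QQ$-linear (in)dependence, and argue by contradiction with a case split on vanishing corners. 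I checked the details: the identities $\beta_{13}=y-x$, $\beta_{23}=w-z$, $\beta_{31}=z-x$, $\beta_{32}=w-y$, $\beta_{33}=x-y-z+w$ are correct; each pair you invoke deletes distinct rows and distinct columns, so it is admissible for (i); the Klein-four symmetry generated by swapping rows $1,2$ and columns $1,2$ does justify the reduction to $x=0$; and in that case pairwise dependence of $\{y,w\}$, $\{w,z\}$, $\{y,z\}$ forces a one-dimensional span for every vanishing pattern, so the bookkeeping you flag as the remaining obstacle is already complete. The trade-off is that the paper's argument is shorter and directly exhibits the desired pair, while yours is longer but makes fully explicit the lattice of multiplicative relations among the nine cross-ratios; like the paper, you correctly never need the full-rank hypothesis, which only enters later in \Cref{lemma:quadratic_factors}.
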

\begin{proof}
  Since $M$ has dense cross-ratio group, there are two $2\times 2$
  submatrices $M_1$ and $M_2$ that satisfy (\ref{item:dense}). If they
  also satisfy (\ref{item:notadjacent}), then we are done, so assume
  for example that they are in the same two columns. Then we can
  replace $M_2$ by another $2\times 2$ matrix in the same two rows so
  that (\ref{item:dense}) is still satisfied; otherwise the other two
  $2\times 2$ submatrices sharing the rows of $M_2$ would have
  cross-ratios that are rational powers of $\crr(M_1)$, and by
  \Cref{lemma:crr_identity} the same would be true for $\crr(M_2)$.
\end{proof}

\begin{lemma}\label{lemma:quadratic_factors}
  Suppose that the upper left $6\times 6$ submatrix of $\Omega$ has
  the form
  \begin{displaymath}
  \begin{pmatrix}
    0 & Y \\
    Y^T & 0 \\
  \end{pmatrix}
\end{displaymath}
  for a $3\times 3$ matrix $Y$ with positive entries such that
  $\rank(Y) = 3$ and $\CRG(Y)$ is dense in $\Rplusmult$.

  Let $\varepsilon > 0$ and $u(x) = x^2 + ax + b \in \RR[x]$ be
  arbitrary where $b>0$. Then there exists a closed path $\gamma$ in
  $\bG(\Omega)$, and $v(x) = x^2 + a'x + b'\in\RR[x]$ with
  $|a-a'| < \varepsilon$ and $|b-b'| < \varepsilon$ such that
  $v(x)\, |\, \chi(f_\gamma)$.
\end{lemma}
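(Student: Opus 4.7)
Proof plan. The plan is to show that the subgroup of $\GLp(X_{i_1})$ generated by certain short-path building blocks is dense in $\GLp(X_{i_1})$ for a suitable base vertex $i_1$, and then to realize any such $u(x)$ as a nearby characteristic polynomial via the companion matrix construction.

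First, I would apply \Cref{prop:2by2_submatrices} to $Y$ to obtain two $2\times 2$ submatrices $M_a, M_b$ whose cross-ratios $a_1, b_1$ generate a dense subgroup of $\Rplusmult$ and which are not contained in the same two rows or the same two columns. Since $Y$ has only three rows, by pigeonhole $M_a$ and $M_b$ must share exactly one row; call this row $i_1 \in I$, and let $i_2, i_3 \in I$ be the remaining rows, so that $M_a = \Omega_{i_1 i_2}^{j_1 j_2}$ and $M_b = \Omega_{i_1 i_3}^{j_3 j_4}$ for appropriate column indices. Set $W = X_{i_1}$, a $2$-dimensional subspace of $V$, and consider the short closed paths $\alpha = (i_1 j_1 i_2 j_2 i_1)$ and $\beta = (i_1 j_3 i_3 j_4 i_1)$ in $G'$. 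By \Cref{prop:simple_composition}, the maps $s_\alpha, s_\beta$ fix lines spanned by vectors $\vv_a, \vv_b$ generating $X_{i_1} \cap X_{i_2}$ and $X_{i_1} \cap X_{i_3}$ respectively, and each stretches a complementary line by the factor $a_1$, respectively $b_1$. Because $\rank(Y) = 3$ forces $X_1 \cap X_2 \cap X_3 = \{0\}$, the vectors $\vv_a, \vv_b$ are linearly independent in $W$.

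Next, since $\langle a_1, b_1 \rangle$ is dense in $\Rplusmult$, neither $a_1$ nor $b_1$ equals $1$. Applying \Cref{cor:crr_in_a_row} to the $2\times 3$ submatrices of $Y$ in rows $\{i_1, i_2\}$ and $\{i_1, i_3\}$, I obtain additional short closed paths $\alpha', \beta'$ in the same respective row pairs but using different column pairs, whose associated cross-ratios are also $\ne 1$. The resulting building blocks $s_{\alpha'}, s_{\beta'}$ lie in $\Fix(\vv_a)$ and $\Fix(\vv_b)$; their non-fixed eigenvectors span lines of the form $X_{i_1} \cap \langle \e_{j}, \e_{j'}\rangle$ that differ from those of $s_\alpha, s_\beta$, because a shared basis vector $\e_j$ would have to lie in $X_{i_1}$, contradicting $\omega_{i_1, j} > 0$. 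After replacing any of these four maps by its inverse (equivalently, reversing the underlying path) to ensure all non-trivial eigenvalues exceed $1$, \Cref{lemma:halfplane} gives $\Fix(\vv_a, a_1) \subset \overline{\langle s_\alpha, s_{\alpha'} \rangle}$ and $\Fix(\vv_b, b_1) \subset \overline{\langle s_\beta, s_{\beta'} \rangle}$. Combined with the linear independence of $\vv_a, \vv_b$ and the density of $\langle a_1, b_1 \rangle$, \Cref{lemma:two_halfplanes} then implies that the closure of $\langle s_\alpha, s_{\alpha'}, s_\beta, s_{\beta'} \rangle$ contains all of $\GLp(W)$.

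Finally, each element of this subgroup equals $s_\gamma$ for some closed path $\gamma$ in $\bG(\Omega)$ based at $i_1$, via the antihomomorphism $s_{\gamma_1 * \gamma_2} = s_{\gamma_2} \circ s_{\gamma_1}$. Given $u(x) = x^2 + ax + b$ with $b > 0$, let $f \in \GLp(W)$ be the companion matrix of $u$, so that $\chi(f) = u$ and $\det(f) = b > 0$. Choose $\gamma$ so that $s_\gamma$ is close enough to $f$ that the coefficients of $v(x) = \chi(s_\gamma) = x^2 + a' x + b'$ satisfy $|a - a'| < \varepsilon$ and $|b - b'| < \varepsilon$; this is possible by continuity of trace and determinant. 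By \Cref{prop:s-gamma-f-gamma}, $v(x) \mid \chi(f_\gamma)$, which completes the proof. The main obstacle is arranging the density in $\GLp(W)$ from the limited supply of short-path building blocks: the hypotheses $\rank(Y) = 3$ and $\CRG(Y)$ dense enter precisely as the linear-independence and cross-ratio inputs that \Cref{lemma:halfplane} and \Cref{lemma:two_halfplanes} need to produce elements throughout $\GLp(W)$.
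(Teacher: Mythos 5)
Your proof is correct and follows essentially the same route as the paper: select two well-placed $2\times 2$ submatrices via \Cref{prop:2by2_submatrices}, use \Cref{cor:crr_in_a_row} to get a second nontrivial stretch in each row pair, combine \Cref{lemma:halfplane} and \Cref{lemma:two_halfplanes} to show the short-path maps generate a dense subgroup of $\GLp(X_{i_1})$, and finish by approximating the companion matrix of $u$ and invoking \Cref{prop:s-gamma-f-gamma}. The only (harmless) differences are cosmetic: you keep a general base vertex $i_1$ rather than relabeling, you justify distinctness of the stretched lines via positivity of the entries rather than $\rank(Y)=3$, and you make explicit the replacement of maps by their inverses to meet the eigenvalue hypotheses of \Cref{lemma:halfplane}.
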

\begin{proof}
  We assume the notations of
  \Cref{sec:bipartite-subgraph,sec:short_closed_paths} with
  $I = \{1,2,3\}$ and $J = \{4,5,6\}$. Note that $X_1$ is a
  2-dimensional subspace in $V$. By \Cref{prop:2by2_submatrices}, we
  may assume without loss of generality that $c_{12}^{45}$ and
  $c_{13}^{46}$ are not rational powers of each other.

  For all $i \in \{2,3\}$ and $j,j' \in \{4,5,6\}$ the map
  $s_{1i}^{jj'} \in \GLp(X_{1})$ acts as the identity on the line
  $F_i = X_{i} \cap X_1$ and stretches the line
  $L_{jj'} = X_{1}\cap \langle \e_{j}, \e_{j'} \rangle$ by
  $c_{1i}^{jj'}$ by \Cref{prop:simple_composition}. Note that the fact
  that $Y$ has full rank implies that $F_{2}$ and $F_{3}$ are distinct
  and $L_{45}$, $L_{56}$ and $L_{46}$ are pairwise distinct.

  Let $H$ be the submonoid of $\GLp(X_1)$ generated by the maps
  $s_{1i}^{jj'}\in \GLp(X_1)$ where $i \in \{2,3\}$ and
  $j,j' \in \{4,5,6\}$. It is in fact a subgroup, since
  $s_{1i}^{jj'}$ and $s_{1i}^{j'j}$ are inverses of each other
  (cf.~\Cref{sec:short_closed_paths}).

  The maps $s_{12}^{45}$, $s_{12}^{56}$ and $s_{12}^{46}$ are elements
  of $\GLp(X_1)$ fixing the line $F_{2}$ and stretching along the
  lines $L_{45}$, $L_{56}$ and $L_{46}$, respectively. Recall that the
  indices were chosen so that $c_{12}^{45} \ne 1$, so by
  \Cref{cor:crr_in_a_row}, at least two of the stretch factors
  $c_{12}^{45}$, $c_{12}^{56}$ and $c_{12}^{46}$ are different from 1.
  By \Cref{lemma:halfplane}, $\Fix(F_{2},c_{12}^{45})$ is contained in
  $\overline{H}$. By applying the same reasoning for the maps
  $s_{13}^{45}$, $s_{13}^{56}$ and $s_{13}^{46}$, we get that
  $\Fix(F_{3},c_{23}^{56})$ is also contained in $\overline{H}$.
  \Cref{lemma:two_halfplanes} then implies $\overline{H} = \GLp(X_1).$

  Now pick an element $g \in \GLp(X_1)$ with $\chi(g) = u(x)$. Then pick
  an $h\in H$ sufficiently close to $g$ so that
  $\chi(h) = x^2 + a'x + b'$ satisfies $|a-a'| < \varepsilon$ and
  $|b-b'| < \varepsilon$. By the definition of $H$, there is a closed
  path $\gamma$ with base point $1$ visiting only the vertices $1,2,3$
  and $4,5,6$ such that $h = s_\gamma$. So the statement follows by
  \Cref{prop:s-gamma-f-gamma}.
\end{proof}

\begin{proof}[Proof of \Cref{theorem:dense-conjugates-criterion}]
  Let $\Omega = i(C,C)$ and let $\theta\in \CC \setminus \{0\}$ be
  arbitrary. Then $v(x) = (x-\theta)(x-\bar\theta) \in \RR[x]$ has
  positive constant term, so by \Cref{lemma:quadratic_factors} there
  exist
  \begin{itemize}
  \item a sequence $(v_j(x))_{j\in \NN}$ of monic quadratic polynomials
    in $\RR[x]$ and
  \item a sequence $(\gamma_j)_{j\in \NN}$ of closed paths in
    $\bG(\Omega)$
  \end{itemize}
  such that
  \begin{itemize}
  \item $v_j(x) \to v(x)$ and
  \item $v_j(x)| \chi(f_{\gamma_j})$ for every
    $j\in \NN$.
  \end{itemize}
  Let $(\theta_j)_{j\in \NN}$ be a sequence such that
  $v_j(\theta_j) = 0$ for all $j\in \NN$ and $\theta_j \to \theta$.

  Assume for a moment that $\theta$ is not an algebraic unit of degree at most
  2, that is, $\theta$ is not a root of a polynomial $x^2+sx\pm 1$ for some
  $s\in \ZZ$. In this case, $\theta_j$ is not an algebraic unit if $j$ is large
  enough, because the set of algebraic units of degree at most 2 is a discrete
  subset of $\CC$. Hence by \Cref{theorem:Galois-conjugate-guarantee}, we have
  $\theta \in \overline{\GP(C)}$.

  To complete the proof, note that the set of $\theta \in \CC$ where
  $\theta \ne 0$ and $\theta$ is not an algebraic unit of degree at most 2 is
  dense in $\CC$ so we have $\overline{\GP(C)} = \CC$.
\end{proof}

\section{Construction of rich collection of curves}
\label{sec:rich-collections}

In this section, we show that rich collections of curves exist on
sufficiently complicated surfaces.

\begin{proposition}\label{prop:rich-collections-exist}
  If $\xi(S) \ge 3$, then there exists a rich collection of
  curves on $S$.
\end{proposition}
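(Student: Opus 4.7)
The plan is to exhibit, on any $S$ with $\xi(S) \ge 3$, an explicit collection of curves $C = C_1 \cup C_2 \cup C'$ such that $C_1 = \{c_1, c_2, c_3\}$ and $C_2 = \{c_4, c_5, c_6\}$ are disjoint multicurves realizing a prescribed intersection matrix with the required algebraic properties, and $C'$ is an additional (possibly empty) set of curves added only to ensure that $C$ fills $S$.

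The concrete target I would aim at is
\[
Y \;=\; \begin{pmatrix} 1 & 1 & 1 \\ 1 & 2 & 1 \\ 1 & 1 & 3 \end{pmatrix}.
\]
This matrix has positive entries and determinant $2$ (so rank $3$), and its $2 \times 2$ submatrices include the upper-left block with cross-ratio $2$ and the middle-middle block with cross-ratio $3$. Since $\log 2$ and $\log 3$ are linearly independent over $\QQ$, $\CRG(Y)$ is dense in $\Rplusmult$, so any pair of disjoint triples with $i(C_1, C_2) = Y$ is rich once supplemented by filling curves if necessary.

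The heart of the proof consists of realizing $Y$ as $i(C_1, C_2)$ on a base surface of complexity exactly $3$. On $S_{2,0}$, I would start with a pants decomposition $C_1 = \{a_1, a_2, a_3\}$ and a dual multicurve $\{b_1, b_2, b_3\}$ with $i(a_i, b_j) = 1$ for all $i, j$, then modify $b_1$ and $b_3$ by a surgery along carefully chosen auxiliary arcs---designed to cross $a_1$ (resp.\ $a_3$) extra times while remaining disjoint from the other $a_k$'s and from the other $b_\ell$'s---so as to bump the $(1,1)$ and $(3,3)$ entries up to $2$ and $3$ respectively. Analogous explicit pictures handle the remaining complexity-$3$ cases $S_{1,3}$ and $S_{0,6}$. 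For $S$ with $\xi(S) > 3$, I would embed a complexity-$3$ subsurface $\Sigma \hookrightarrow S$, transport the six curves across (their geometric intersection numbers being intrinsic to the isotopy classes and preserved under inclusion into an essential subsurface), and then append further simple closed curves in $S \setminus \Sigma$ as needed to make the full collection $C$ fill $S$. Since the rich condition depends only on the submatrix $i(C_1, C_2)$, these additional filling curves are otherwise unconstrained.

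The main obstacle is the explicit base construction: drawing two disjoint triples on a small surface and verifying that every entry of the resulting intersection matrix matches the corresponding entry of a matrix whose cross-ratios contain two multiplicatively independent rationals. This is a concrete topological and combinatorial bookkeeping task without conceptual subtlety beyond the number-theoretic requirement on the cross-ratios; once carried out on each of the three minimal-complexity base surfaces, the extension to all $S$ with $\xi(S) \ge 3$ is a routine embed-and-fill argument.
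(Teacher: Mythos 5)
Your overall strategy (realize a positive, rank--3, dense-cross-ratio-group intersection matrix between two disjoint triples on a complexity--3 surface, then embed and fill) is the same as the paper's, but the step that carries all the content --- actually producing the two triples --- has a genuine gap. Your proposed starting configuration on $S_{2,0}$, a pants decomposition $\{a_1,a_2,a_3\}$ together with a disjoint triple $\{b_1,b_2,b_3\}$ with $i(a_i,b_j)=1$ for all $i,j$, does not exist: cutting along the pants decomposition, each arc system $b_j\cap P$ in a pair of pants $P$ has an even number of endpoints on $\partial P$, which forces a parity condition on the columns of $i(A,B)$ that the all-ones matrix violates (in every gluing pattern of the two pairs of pants). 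The same obstruction shows your target matrix $Y$ is not realizable on $S_{2,0}$, and it is certainly not realizable on $S_{0,6}$, where all geometric intersection numbers between simple closed curves are even. So the ``surgery along carefully chosen auxiliary arcs'' is not bookkeeping deferred to a picture --- it is being asked to repair an impossible base case, and in general prescribing all nine entries of $i(C_1,C_2)$ between two multicurves is exactly the hard part. The paper avoids this entirely with \Cref{lemma:two_multicurve_construction}: starting from a pair $(A,B)$ whose intersection matrix is a full-rank matrix \emph{with zeros} (easy to draw), it replaces the pair by $(A,\,T_B^{\s}(A))$, whose intersection matrix $i(A,B)\,D_{|\s|}\,i(B,A)$ automatically has positive entries and rank $3$, and one then just checks density of the cross-ratio group of the resulting explicit matrix.

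Two smaller points. First, appending curves contained in $S\setminus\Sigma$ can never make the collection fill $S$: an annular neighborhood of each component of $\partial\Sigma$ survives in the complement of the union. You need curves crossing $\partial\Sigma$; the paper instead extends $C_1$ and $C_2$ to maximal multicurves of $S$ itself. Second, the extra curves are not completely unconstrained: for $\GP(C)$ to make sense the whole collection must still split as a union of two filling multicurves containing $C_1$ and $C_2$ respectively, so the added curves must be distributed into the two sides consistently.
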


First we prove a lemma about intersection matrices. For a multicurve
$B = \{b_1,\ldots,b_\ell\}$ on $S$ and a vector
$\s = (s_1,\ldots,s_\ell)$ with integer coordinates, the product
$T_B^\s = \prod_{j=1}^\ell T_{b_j}^{s_j}$ is called a
\emph{multitwist} about the multicurve $B$.

\begin{lemma}\label{lemma:two_multicurve_construction}
  Let $A$ and $B$ be multicurves of $S$. If $\s>0$ or $\s<0$, then
  \begin{displaymath}
    i(A,T_B^{\s}(A)) = i(A,B) D_{|\s|} i(B,A)
  \end{displaymath}
  where $D_{|\s|}$ is the $\ell\times \ell$ diagonal matrix with
  entries $|s_1|,\ldots,|s_\ell|$ on the diagonal.
\end{lemma}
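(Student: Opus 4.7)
The plan is to compute the $(i,j)$-entry $i(a_i, T_B^{\s}(a_j))$ of the left-hand side directly and verify that it equals $\sum_{k=1}^\ell |s_k|\, i(a_i, b_k)\, i(b_k, a_j)$, i.e., the $(i,j)$-entry of $i(A,B)\, D_{|\s|}\, i(B,A)$. By symmetry between positive and negative Dehn twists, I may assume $\s > 0$, so every $|s_k|$ equals $s_k$.

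First I would place $A \cup B$ in minimal position and take a regular neighborhood $N(B) = \bigsqcup_{k=1}^{\ell} N(b_k)$ consisting of disjoint annuli, so that $T_B^\s$ is represented by a homeomorphism supported in $N(B)$ acting as $T_{b_k}^{s_k}$ on each $N(b_k)$. Outside $N(B)$, the curve $T_B^\s(a_j)$ coincides with $a_j$, and since $i(a_i, a_j) = 0$ (both are components of the multicurve $A$, or equal when $i=j$) we may arrange $a_i$ and $a_j$ to be disjoint there; hence no intersections occur outside $N(B)$. Inside each annulus $N(b_k)$, the curve $a_i$ cuts across as $i(a_i, b_k)$ parallel essential arcs, while $T_B^\s(a_j)$ contributes $i(a_j, b_k)$ arcs that wind around the core an additional $s_k$ times; a standard annulus computation produces exactly $s_k \cdot i(a_i, b_k)\cdot i(b_k, a_j)$ intersection points in $N(b_k)$.

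The principal obstacle I expect is verifying that these intersections are all essential, so that the local counts sum to the geometric intersection number $i(a_i, T_B^\s(a_j))$. This is precisely where the sign hypothesis on $\s$ is indispensable: the common sign guarantees that inside every annulus the arcs of $T_B^\s(a_j)$ spiral consistently relative to those of $a_i$, ruling out bigons confined to a single $N(b_k)$. A bigon spanning several annuli would have to be joined by arcs traveling through the complement $S\setminus N(B)$, where $a_i$ and $T_B^\s(a_j)=a_j$ are already disjoint; a bigon-criterion analysis then reduces such a bigon to one living in a single annulus, which was already excluded. Once minimal position is established, summing the local counts over $k$ delivers the required matrix identity.
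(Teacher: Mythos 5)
Your proof is correct in outline, but it takes a genuinely different and much more labor-intensive route than the paper. The paper's entire proof is a specialization of the inequality
\begin{displaymath}
\left|i\bigl(A,T_B^{\s}(C)\bigr) - i(A,B)\, D_{|\s|}\, i(B,C)\right| \le i(A,C),
\end{displaymath}
which is quoted entrywise from Farb--Margalit (Prop.~3.4) for multitwists with coherent signs; setting $C=A$ makes the error term $i(A,A)=0$ vanish because the components of a multicurve are disjoint, and equality follows immediately. You instead re-prove this special case from scratch via annular neighborhoods and the bigon criterion, which is essentially the argument underlying the cited proposition. That buys self-containedness, but it also means you inherit the delicate part that the citation encapsulates: the exact per-annulus count of $s_k\, i(a_i,b_k)\, i(b_k,a_j)$ and the exclusion of bigons are not independent steps --- a pair of arcs in an annulus (one straight, one winding $s_k$ times) can a priori meet $s_k\pm 1$ times depending on endpoint configuration, and the disjointness of $a_i$ and $a_j$ is what lets you normalize the endpoints so the count is exact. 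More importantly, your claim that a bigon spanning several annuli ``reduces to one living in a single annulus'' is asserted rather than argued; this reduction (an innermost-bigon analysis of how the disk meets $\partial N(B)$, using that all $s_k$ have the same sign so the strands spiral coherently in every annulus) is the real content of the Farb--Margalit proof and would need to be written out for your argument to be complete. If you intend a self-contained proof you must supply it; otherwise the efficient move is the paper's: cite the inequality and observe that $i(A,A)=0$.
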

\begin{proof}
  If $A,B$ and $C$ are multicurves on $S$ and $\s > 0$ or $\s<0$, then we have
  \begin{displaymath}
    \left|i(a,T_B^{\s}(c)) - \sum_{j=1}^\ell|s_j| i(a,b_j)i(b_j,c)\right| \le i(a,c)
  \end{displaymath}
  for all $a\in A$ and $c\in C$ \cite[Prop.~3.4]{FarbMargalit12}. We
  can summarize these inequalities in the single inequality
  \begin{equation}\label{eq:intersections}
    \left|i(A,T_B^{\s}(C)) - i(A,B) D_{|\s|} i(B,C) \right| \le i(A,C).
  \end{equation}
  We obtain the claimed equation by setting $C=A$.
\end{proof}

\begin{proof}[Proof of \Cref{prop:rich-collections-exist}]
  Consider the pairs of multicurves $A$, $B$ on $S_{0,6}$, $S_{1,3}$
  and $S_{2,0}$, pictured on \Cref{fig:curves}.
  \begin{figure}[ht]
    \centering
    \includegraphics{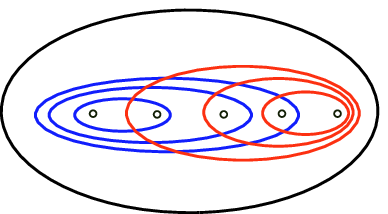}
    \includegraphics{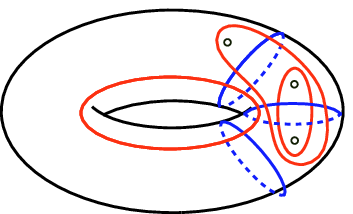}
    \includegraphics{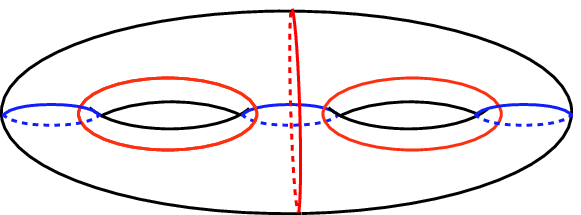}
    \caption{Curves}
    \label{fig:curves}
  \end{figure}
  In the three cases, the intersection matrix $i(A,B)$ is
  \begin{displaymath}
    \begin{pmatrix}
      2 & 2 & 2 \\
      0 & 2 & 2 \\
      0 & 0 & 2 \\
    \end{pmatrix}, \quad
    \begin{pmatrix}
      1 & 1 & 1 \\
      0 & 2 & 2 \\
      0 & 0 & 2 \\      
    \end{pmatrix} \AND
    \begin{pmatrix}
      1 & 1 & 0 \\
      0 & 2 & 0 \\
      0 & 1 & 1 \\
    \end{pmatrix},
  \end{displaymath}
  respectively. In the first two cases $i(A,B)i(B,A)$ is
  \begin{displaymath}
    \begin{pmatrix}
      12 & 8 & 4 \\
      8 & 8 & 4 \\
      4 & 4 & 4 \\
    \end{pmatrix} \AND
    \begin{pmatrix}
      3 & 4 & 2 \\
      4 & 8 & 4 \\
      2 & 4 & 4
    \end{pmatrix}.
  \end{displaymath}
  In the third case
  \begin{displaymath}
    i(A,B)
    \begin{pmatrix}
      1 & 0 & 0  \\
      0 & 1 & 0  \\
      0 & 0 & 2  \\
    \end{pmatrix}
    i(B,A) =
    \begin{pmatrix}
      2 & 2 & 1 \\
      2 & 4 & 2 \\
      1 & 2 & 3
    \end{pmatrix}.
  \end{displaymath}
  By \Cref{lemma:two_multicurve_construction}, we obtain a pair of
  multicurves on all three surfaces with rank 3 intersection matrix
  with positive entries and dense cross-ratio group. The curves
  necessarily fill the surface in each case, so we obtain a
  rich collection of curves.

  Any other compact orientable surface with $\xi(S) \ge 3$
  can be obtained from the three surfaces above by removing open disks
  and taking connected sums with tori. Hence we obtain a collection
  $C$ on all these surfaces that satisfy all properties of richness
  except the filling property. However, the filling property is easily
  achieved by extending both multicurves to maximal multicurves, being
  careful not to include the same curve in both maximal multicurves.
\end{proof}

We are now ready to prove \Cref{theorem:dense_conjugates}.

\maintheorem*

\begin{proof}
  There exists a rich collection of curves by
  \Cref{prop:rich-collections-exist}. By
  \Cref{theorem:dense-conjugates-criterion}, this implies that
  $\overline{\GP(C)} = \CC$.
\end{proof}

\subsection*{Acknowledgements}
We are grateful to Ursula Hamenst\"adt, Autumn Kent, Dan Margalit and
the referees for their comments and help.

\bibliographystyle{alpha}
\bibliography{../mybibfile}

\end{document}